\documentclass[11pt,a4paper]{amsart}
\usepackage[T1]{fontenc}
\usepackage[utf8]{inputenc}
\usepackage{lmodern}
\usepackage[margin=27mm,headheight=14pt]{geometry}
\usepackage{amsmath,amssymb,mathtools}
\usepackage{tikz-cd}
\usepackage{enumitem}
\usepackage{microtype}
\usepackage{needspace}
\usepackage[hidelinks]{hyperref}
\hypersetup{pdftitle={Frobenius functors and n-torsionfree objects},
  pdfauthor={ Zhibing Zhao}}
\setlist[enumerate,1]{label=\textup{(\arabic*)},leftmargin=2em,itemsep=2pt,topsep=4pt}
\newtheorem{theorem}{Theorem}[section]
\newtheorem{proposition}[theorem]{Proposition}
\newtheorem{lemma}[theorem]{Lemma}
\newtheorem{corollary}[theorem]{Corollary}
\theoremstyle{definition}
\newtheorem{definition}[theorem]{Definition}
\newtheorem{example}[theorem]{Example}
\newtheorem{remark}[theorem]{Remark}
\newcommand{\A}{\mathcal A}
\newcommand{\B}{\mathcal B}
\newcommand{\Pj}{\mathcal P}
\newcommand{\T}{\mathcal T}
\newcommand{\mm}{\mathfrak m}
\newcommand{\Mod}{\text{-}\mathrm{Mod}}
\newcommand{\modu}{\text{-}\mathrm{mod}}
\newcommand{\op}{\mathrm{op}}
\DeclareMathOperator{\Hom}{Hom}
\DeclareMathOperator{\Ext}{Ext}
\DeclareMathOperator{\Ker}{Ker}
\DeclareMathOperator{\Coker}{Coker}
\DeclareMathOperator{\add}{add}
\DeclareMathOperator{\GP}{GP}
\DeclareMathOperator{\Id}{Id}
\DeclareMathOperator{\Tr}{Tr}
\DeclareMathOperator{\fd}{fd}
\DeclareMathOperator{\id}{id}
\DeclareMathOperator{\soc}{soc}
\DeclareMathOperator{\Arr}{Arr}
\DeclareMathOperator{\Gr}{Gr}
\newcommand{\xra}[1]{\xrightarrow{#1}}
\newcommand{\orth}[1]{{}^{\perp}\Pj(#1)}
\numberwithin{equation}{section}
\allowdisplaybreaks[1]

\title{Frobenius functors and $n$-torsionfree objects}
\author[Z. Zhao]{Zhibing Zhao}
\address{School of Mathematical Sciences,
Anhui University, Hefei 230601, Anhui, P. R. China}
\date{}
\subjclass[2020]{Primary 16E05, 18G25; Secondary 16S40, 16E65}
\keywords{Frobenius functor, $n$-torsionfree object,
weakly Gorenstein category, Auslander condition}

\begin{document}
\begin{abstract}
We study $n$-torsionfree objects in abelian categories with enough projectives.
Frobenius functors preserve $n$-torsionfreeness, and faithful ones reflect it.
We prove that stabilization of the torsionfree filtration implies weak
Gorensteinness. For Frobenius extensions satisfying a generator condition,
we compare the terms of minimal injective resolutions and obtain transfer
of Auslander-type conditions and of the Auslander--Gorenstein conjecture.
We also compute a family of
non-Gorenstein algebras whose torsionfree filtrations stabilize at level
two and contain explicit nonprojective Gorenstein projective modules.
\end{abstract}
\maketitle

\section{Introduction}

Let $R$ be a two-sided noetherian ring. A finitely generated left
$R$-module $M$ is called $n$-torsionfree if
\[
  \Ext^i_{R^{\op}}(\Tr_R M,R)=0\qquad(1\leq i\leq n),
\]
where $\Tr_R M$ is the transpose of $M$. This notion was introduced by
Auslander and Bridger in \cite{AB}. The module $M$ is 1-torsionfree  if and only if it is torsionless, and $M$ is 2-torsionfree if and only if it is reflexive. In \cite{AB}, Auslander and Bridger described the $n$-torsionfreeness of a module in terms of $n$ successive monomorphic left projective approximations; see
\cite[Theorem 2.17]{AB}. This description allows one to define
$n$-torsionfree objects in an abelian category $\A$ with enough projectives; see Definition \ref{def:torsionfree}.

Denote by $\T^n(\A)$ the full subcategory of $\A$ consisting of all $n$-torsionfree objects. We obtain a filtration
\[
  \A=\T^0(\A)\supseteq\T^1(\A)\supseteq\T^2(\A)\supseteq\cdots.
\]

We first study the behavior of these subcategories under adjoint functors.
For a Frobenius extension, Zhao \cite[Theorem~3.5]{Zhao24} proved that
restriction preserves and reflects $n$-torsionfreeness of finitely generated
modules. Chen and Ren \cite{CR} proved the corresponding result for
Gorenstein projective objects under faithful Frobenius functors, including
Frobenius pairs with twists. Related results for relative torsionfreeness
and tensor functors appear in \cite{BLZ} and \cite[Lemma~5.2]{Liu}.
Our first result concerns exact adjoint pairs; see Theorem~\ref{thm:adjoint-transfer}.
We denote the projective objects of $\A$ by $\Pj(\A)$.

\vspace{0.2cm}

{\bf Theorem A}
Let $F:\A\rightleftarrows\B:G$ be an adjoint pair with $F$ left adjoint to $G$.
Assume that $F$ and $G$ are exact and that $G$ preserves projectives.
Then $F$ preserves $n$-torsionfree objects for every $n\geq1$.
If, moreover, $F$ is faithful and
\[
  \Pj(\A)=\add G(\Pj(\B)),
\]
then $X\in\T^n(\A)$ if and only if $F(X)\in\T^n(\B)$.

\vspace{0.2cm}

Frobenius functors satisfy the preservation hypotheses, and faithful
Frobenius functors also satisfy the condition on projectives.
Thus the theorem extends Zhao's result to abelian categories.
It also gives another proof of the Gorenstein projective case in
\cite{CR}. An example in Section~\ref{sec:functors} shows that the
adjoint pair need not be Frobenius, see Example~\ref{ex:arrow}. We also obtain a componentwise
criterion for bounded complexes, see Proposition~\ref{prop:complexes}.

 We next relate infinite torsionfreeness
to Gorenstein projectivity. Put$
 \orth{\A}=\{X\in\A\mid \Ext^i_{\A}(X,P)=0
       \text{ for all }i\geq1\text{ and }P\in\Pj(\A)\},
 \T^\infty(\A)=\bigcap_{n\geq1}\T^n(\A)$.
Similar to the case of module categories, we show that
\[
  \GP(\A)=\orth{\A}\cap\T^\infty(\A).
\]
Consequently, $\A$ is weakly Gorenstein if and only if every object of
$\orth{\A}$ is $1$-torsionfree. This is the categorical form of
\cite[Theorem~1.2]{RZ}. We further prove that an equality
$\T^n(\A)=\T^{n+1}(\A)$ at one level implies weak Gorensteinness.
For a Krull--Schmidt category, this applies whenever one level contains
only finitely many indecomposable isomorphism classes.

In Section~\ref{sec:extensions}, we apply these results to Frobenius
extension $S/R$. Under the assumption that $S_R$ is a generator,
we prove that corresponding terms in the minimal injective resolutions
of ${}_RR$ and ${}_SS$ have equal flat dimensions. This gives transfer
of quasi-$n$-Gorensteinness and the Auslander condition. It also shows that validity of the Auslander–Gorenstein conjecture is preserved under Frobenius extensions of Artin algebras, see Theorem \ref{prop:agc}.

\vspace{0.2cm}

{\bf Theorem B}
Let $S/R$ be a Frobenius extension of Artin algebras.
Assume that $S_R$ is a generator. Then AGC holds for $R$ if and only
if it holds for $S$.

\vspace{0.2cm}

Finally, we compute all torsionfree levels for
\[
 S_{d,m}=k[x_1,\ldots,x_d,t]/\big((x_1,\ldots,x_d)^2,t^m\big),
 \qquad d,m\geq2.
\]
These algebras are not Gorenstein. Their torsionfree filtrations have
two strict inclusions and then stabilize at the Gorenstein projective
modules. The modules $S_{d,m}/(t^q)$, for $1\leq q<m$, are explicit
nonprojective Gorenstein projective modules.

\medskip
\noindent\textbf{Notation.}
Throughout, abelian categories are locally small and have enough
projectives. Subcategories are full and closed under isomorphisms.
For a class $\mathcal X$, we write $\add\mathcal X$ for its closure
under finite direct sums and direct summands. Rings and ring
homomorphisms are unital.
We write $R\Mod$ for all left $R$-modules and $R\modu$ for finitely
generated left $R$-modules. Classical torsionfreeness is considered
over two-sided noetherian rings. Weak Gorensteinness of Artin algebras
refers to finitely generated modules. Right-handed statements are
interpreted over opposite rings.

\section{\texorpdfstring{$n$}{n}-torsionfree objects}\label{sec:filtration}

In this section, we introduce the notion of $n$-torsionfree object in an abelian category $\A$ with enough projectives, and some basic properties of this kind of objects are given.
\subsection{$n$-torsionfree objects}
Let $\A$ be an abelian category  with enough projective objects. The latter condition means that for each object $M$, there is an epimorphism $P\rightarrow M$ with $P$ projective.
A morphism $f:M\to P$ with $P\in\Pj(\A)$ is called a
\emph{left projective approximation} if every morphism from $M$ to a
projective object factors through $f$. We do not assume that such
approximations exist for every object.

\begin{definition}\label{def:torsionfree}
An object $M\in\A$ is \emph{$n$-torsionfree}, for $n\geq1$, if there
is an exact sequence
\begin{equation}\label{eq:approx-sequences}
 0\longrightarrow M\xra{f_1}P_1\xra{f_2}\cdots\xra{f_n}P_n
\end{equation}
with each $P_i$ projective, such that the inclusion
$\operatorname{Im}f_i\to P_i$ is a left projective approximation
for $1\leq i\leq n$. Their full subcategory is denoted by
$\T^n(\A)$. Set $\T^0(\A)=\A$ and
$\T^\infty(\A)=\bigcap_{n\geq1}\T^n(\A)$.
Objects of $\T^\infty(\A)$ are called \emph{infinitely torsionfree}.
\end{definition}

\begin{remark}\label{rem:classical}
(1) Every object in $\mathcal{A}$ is $0$-torsionfree and every projective object is infinitely torsionfree. If an object $M$ is $m$-torsionfree, then it is $n$-torsionfree when $m>n$. And we have a decreasing sequence of subcategories of $\mathcal{A}$
$$\mathcal{T}^0(\mathcal{A})\supseteq \mathcal{T}^1(\mathcal{A})\supseteq \cdots \supseteq \mathcal{T}^n(\mathcal{A})\supseteq\cdots.$$

(2) Let $\A$ be the left $R$-module category $R$-mod with $R$ a two-sided noetherian ring. Then an object $M$ which is $n$-torsionfree in $\mathcal{A}$  is just an $n$-torsionfree module; see \cite[Theorem 2.17]{AB}.

In particular, $\T^1(R\modu)$ consists of torsionless modules and
$\T^2(R\modu)$ consists of reflexive modules. For arbitrary modules,
we use Definition~\ref{def:torsionfree} without a transpose description.
\end{remark}

\begin{lemma}\label{lem:ext-characterization}
Let $M$ be an object in $\mathcal{A}$. Then $M$ is $n$-torsionfree if and only if it admits an exact
sequence
\begin{equation}\label{eq:finite-coresolution}
 0\longrightarrow M\xra{f_1}P_1\xra{f_2}\cdots
 \xra{f_n}P_n\longrightarrow T_n\longrightarrow0
\end{equation}
with all $P_i$ projective and $\Ext^j_{\A}(T_n,Q)=0$ for
$1\leq j\leq n$ and every $Q\in\Pj(\A)$.
\end{lemma}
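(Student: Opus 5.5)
Both directions rest on one observation. For a monomorphism $\iota:K\to P$ with $P$ projective and cokernel $C$, the long exact sequence for $\Hom_{\A}(-,Q)$, $Q\in\Pj(\A)$, gives
\[
 \Hom(P,Q)\xra{\iota^*}\Hom(K,Q)\to\Ext^1(C,Q)\to 0
\]
and, for $j\geq1$, isomorphisms $\Ext^j(K,Q)\cong\Ext^{j+1}(C,Q)$. So $\iota$ is a left projective approximation if and only if $\Ext^1_{\A}(C,Q)=0$ for all projective $Q$.

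Given a sequence as in Definition~\ref{def:torsionfree}, put $K_i=\operatorname{Im}f_i$ for $1\leq i\leq n$ and $T_n=\Coker f_n$. This extends the sequence to \eqref{eq:finite-coresolution}, and we get short exact sequences
\[
 0\to K_i\to P_i\to K_{i+1}\to0 \quad (1\leq i<n),\qquad 0\to K_n\to P_n\to T_n\to0 .
\]
Conversely, from \eqref{eq:finite-coresolution} we obtain the same short exact sequences. Note that $K_1\cong M$ because $f_1$ is monic.

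For $(\Leftarrow)$, assume $\Ext^j(T_n,Q)=0$ for $1\leq j\leq n$. I show by descending induction on $i$ that $\Ext^j(K_{i+1},Q)=0$ for $1\leq j\leq i$, where $K_{n+1}:=T_n$. The base case $i=n$ is the hypothesis. For the step, dimension shifting along $0\to K_i\to P_i\to K_{i+1}\to0$ gives $\Ext^{j}(K_i,Q)\cong\Ext^{j+1}(K_{i+1},Q)$ for $j\geq1$. This vanishes for $1\le j\le i-1$, which establishes the claim for $i-1$. In particular $\Ext^1(K_{i+1},Q)=0$ for every $1\le i\le n$, so by the observation each inclusion $K_i\to P_i$ is a left projective approximation. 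Hence $M$ is $n$-torsionfree.

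For $(\Rightarrow)$, the approximation property of each $K_i\to P_i$ gives $\Ext^1(K_{i+1},Q)=0$ for $1\le i\le n$. I then prove by induction on $i$ that $\Ext^j(K_{i+1},Q)=0$ for $1\le j\le i$. The case $i=1$ is immediate. For $2\le j\le i$, dimension shifting gives $\Ext^j(K_{i+1},Q)\cong\Ext^{j-1}(K_i,Q)$, which vanishes by the induction hypothesis since $1\le j-1\le i-1$; the case $j=1$ is the approximation property. Taking $i=n$ gives $\Ext^j(T_n,Q)=0$ for $1\le j\le n$.

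The argument is routine bookkeeping. The main points needing care are the indexing, especially identifying $K_{n+1}$ with $T_n$, and the fact that the equivalence in the observation uses only $\Ext^1$ together with projectivity of $P$. No existence of approximations is needed, because the sequences are given in both directions.
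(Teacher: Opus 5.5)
Your proof is correct and takes the same approach as the paper. Both arguments rest on the fact that $K_i\to P_i$ is a left projective approximation exactly when $\Ext^1_{\A}(P_i/K_i,Q)=0$ for all projective $Q$, combined with dimension shifting along the short exact pieces. The paper replaces your two inductions with the single isomorphism $\Ext^j_{\A}(T_i,Q)\cong\Ext^{n-i+j}_{\A}(T_n,Q)$ for $j\geq1$, where $T_i=\Coker f_i$, and reads off both directions from it.
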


\begin{proof}
 If $M$ is $n$-torsionfree, then there exists an exact sequence 
 $$0\longrightarrow M\xra{f_1}P_1\xra{f_2}\cdots
 \xra{f_n}P_n\longrightarrow T_n$$
 with $P_i$  projective, such that ${\rm Im} f_i \rightarrow P_i$ is a left $\mathcal{P(A)}$-approximation of ${\rm Im} f_i$ for $1\leq i\leq n$. Taking ${\rm Coker}f_i=T_i$, we have that ${\rm Ext}^1_{\mathcal{A}}(T_i, \mathcal{P(A)})=0$ for $1\leq i\leq n$. By dimension shifting,  ${\rm Ext}^j_{\mathcal{A}}(T_i, \mathcal{P(A)})\cong{\rm Ext}^{n-i+j}_{\mathcal{A}}(T_n, \mathcal{P(A)})$, and we obtain that ${\rm Ext}^{j}_{\mathcal{A}}(T_n, \mathcal{P(A)})=0$ for $1\leq j\leq n$.

Conversely, if there exists an exact sequence (2.2)
with $P_i$  projective and  ${\rm Ext}^{j}_{\mathcal{A}}(T_n, \mathcal{P(A)})=0$ for $1\leq j\leq n$, then ${\rm Ext}^1_{\mathcal{A}}(T_i, \mathcal{P(A)})\cong{\rm Ext}^{n-i+1}_{\mathcal{A}}(T_n, \mathcal{P(A)})=0$
for $1\leq i\leq n$ where $T_i={\rm Coker}f_i$. It follows that ${\rm Im} f_i \rightarrow P_i$ is a left $\mathcal{P(A)}$-approximation of ${\rm Im} f_i$ for $1\leq i\leq n$. Thus $M$ is $n$-torsionfree.
\end{proof}

\subsection{Comparison and recurrence}

It is well-known that the class of $n$-torsionfree modules over a two-sided Noetherian ring is closed under direct sums and direct summands. We will show that the subcategory $\mathcal{T}^n(\mathcal{A})$ has similar properties.  The following result seems to be well-known.
\begin{lemma}\label{lem:schanuel}
Suppose that
\[
 0\longrightarrow M\xra{f_1}P_1\xra{g_1}N_1\longrightarrow0
\]and 
\[
 0\longrightarrow M\xra{f_2}P_2\xra{g_2}N_2\longrightarrow0
\]
are approximation sequences with $P_i$ projective. Then
$P_1\oplus N_2\simeq P_2\oplus N_1$.
\end{lemma}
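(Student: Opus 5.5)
This is a Schanuel-type argument carried out with a pullback (equivalently, a pushout) along the two approximations.

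\emph{Step 1: compare the two maps.} Since $f_1$ is a left projective approximation and $P_2$ is projective, $f_2$ factors through $f_1$. So there is $\alpha:P_1\to P_2$ with $\alpha f_1=f_2$. Symmetrically there is $\beta:P_2\to P_1$ with $\beta f_2=f_1$.

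\emph{Step 2: build a split sequence.} Form the pushout of $f_1$ and $f_2$, or more concretely the exact sequence
\[
0\longrightarrow M\xra{\binom{f_1}{f_2}}P_1\oplus P_2\longrightarrow E\longrightarrow 0 .
\]
On the one hand, change coordinates on $P_1\oplus P_2$ by the automorphism $\theta=\begin{pmatrix}1&0\\-\alpha&1\end{pmatrix}$. It sends $\binom{f_1}{f_2}$ to $\binom{f_1}{0}$. Hence $E\cong \Coker\binom{f_1}{0}\cong N_1\oplus P_2$.

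On the other hand, the automorphism $\begin{pmatrix}1&-\beta\\0&1\end{pmatrix}$ sends $\binom{f_1}{f_2}$ to $\binom{0}{f_2}$. Hence $E\cong P_1\oplus N_2$.

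Combining the two gives $P_1\oplus N_2\cong E\cong P_2\oplus N_1$.

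\emph{Step 3: check the identifications.} The only point needing care is that these matrices really are automorphisms, which holds because they are unitriangular with explicit inverses. Also, the cokernel of $\binom{f_1}{0}:M\to P_1\oplus P_2$ is $\Coker f_1\oplus P_2=N_1\oplus P_2$, since cokernels commute with direct sums in an additive category.

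No serious obstacle arises: only the approximation property is used, to obtain $\alpha$ and $\beta$. In particular, no Ext-vanishing from Lemma~\ref{lem:ext-characterization} is needed.
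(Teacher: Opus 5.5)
Your proof is correct and is essentially the paper's argument. Your $E=\Coker\binom{f_1}{f_2}$ is the paper's pushout of $f_1$ and $f_2$, up to the sign automorphism $\begin{pmatrix}1&0\\0&-1\end{pmatrix}$ of $P_1\oplus P_2$, and your unitriangular coordinate changes built from $\alpha$ and $\beta$ are the explicit matrix form of the splittings the paper gets from the pushout's universal property (with only the minor advantage that you read off both decompositions of $E$ directly as cokernels).
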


\begin{proof}Consider the following pushout diagram:

\[
\begin{tikzcd}[column sep=0.7cm,row sep=0.7cm]
& 0 \arrow[d] & 0 \arrow[d] \\
0 \arrow[r] & M \arrow[d,"f_2"] \arrow[r,"f_1"] & P_1 \arrow[d,dashed] \arrow[r] & N_1 \arrow[d,equals] \arrow[r] & 0 \\
0 \arrow[r] & P_2 \arrow[d] \arrow[r,dashed,"h_1"] & E \arrow[d] \arrow[r] & N_1 \arrow[r] & 0 \\
& N_2 \arrow[d] \arrow[r,equals] & N_2 \arrow[d] \\
& 0 & 0.
\end{tikzcd}
\]
Since $f_1$ is a left $\mathcal{P(A)}$-approximation of $M$, there exists a morphism $g:P_1\rightarrow P_2$ such that $f_2=g\cdot f_1$. By the universal property of pushout diagram, there exists a morphism $h:E\rightarrow P_2$ such that $h\cdot h_1={\rm Id}_{P_2}$. It follows that the third exact row is split, that is $E\cong P_2\oplus N_1$. Similarly, we can obtain that $E\cong P_1\oplus N_2$. Therefore, we have  $P_1\oplus N_2\cong P_2\oplus N_1$.
\end{proof}

\begin{proposition}\label{prop:summands} For any non-negative integer $n$, $\T^n(\A)$ is closed under finite direct sums and direct summands.
\end{proposition}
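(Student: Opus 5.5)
Direct sums are handled directly, summands by a Schanuel-type induction; the case $n=0$ is trivial, so take $n\geq1$.

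\emph{Direct sums.} Let $M,M'\in\T^n(\A)$ with sequences as in Lemma~\ref{lem:ext-characterization}: $0\to M\to P_1\to\cdots\to P_n\to T_n\to0$ and the analogous one for $M'$ with cokernel $T_n'$. Their direct sum is exact with projective middle terms and cokernel $T_n\oplus T_n'$. Since $\Ext^j_\A(T_n\oplus T_n',Q)\cong\Ext^j_\A(T_n,Q)\oplus\Ext^j_\A(T_n',Q)=0$ for $1\le j\le n$, Lemma~\ref{lem:ext-characterization} gives $M\oplus M'\in\T^n(\A)$.

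\emph{Direct summands.} I would prove this by induction on $n$, together with the recurrence: $M\in\T^n(\A)$ if and only if $M$ has a left projective approximation $M\to P$ that is a monomorphism with cokernel $N\in\T^{n-1}(\A)$. This recurrence follows directly from Definition~\ref{def:torsionfree}.

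Suppose $M=X\oplus Y$ is $n$-torsionfree, with approximation sequence $0\to M\xra{f}P\to N\to0$ and $N\in\T^{n-1}(\A)$.
\begin{enumerate}
\item Restrict $f$ to $X$: the composite $X\to M\to P$ is a monomorphism. It is also a left approximation, since any map $X\to Q$ with $Q$ projective extends to $M$ via the projection $M\to X$ and then factors through $f$.
\item Let $N_X$ be its cokernel, and define $N_Y$ similarly for $Y$. Forming the direct sum gives an approximation sequence $0\to M\to P\oplus P\to N_X\oplus N_Y\to0$.
\item Lemma~\ref{lem:schanuel} then yields $P\oplus N_X\oplus N_Y\cong P\oplus P\oplus N$.
\item By the direct-sum part, $P\oplus P\oplus N\in\T^{n-1}(\A)$, since projectives are infinitely torsionfree. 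By the induction hypothesis (closure under summands at level $n-1$), $N_X$ is a summand of this object and hence lies in $\T^{n-1}(\A)$.
\item The recurrence now gives $X\in\T^n(\A)$.
\end{enumerate}

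The main obstacle is the Schanuel step: the sequence for $X$ has to be chosen so that its cokernel becomes a summand of something already known to be $(n-1)$-torsionfree. That is why $f$ is restricted to $X$ rather than using an arbitrary approximation. Once Lemma~\ref{lem:schanuel} and the direct-sum case are in hand, the induction is routine. The base case $n=1$ is step (1) alone, since $\T^0(\A)=\A$.
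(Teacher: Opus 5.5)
Your proposal is correct and matches the paper's proof: you restrict the approximation $f$ to each summand, take the direct-sum approximation sequence $0\to M\to P\oplus P\to N_X\oplus N_Y\to 0$, apply Lemma~\ref{lem:schanuel} to get $P\oplus N_X\oplus N_Y\cong P\oplus P\oplus N$, use induction on $n$, and splice. The only cosmetic difference is that you prove closure under direct sums through the Ext-characterization of Lemma~\ref{lem:ext-characterization}, whereas the paper directly notes that direct sums of approximation sequences are approximation sequences.
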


\begin{proof}
Since finite direct sums of approximation sequences are again approximation
sequences, we get that $\T^n(\A)$ is closed under finite direct sums. 

 For direct summands, we use induction on $n$. Let $M\cong M_1\oplus M_2$ be an $n$-torsionfree object in $\mathcal{A}$. We will show that $M_1$ is also $n$-torsionfree. For the case of $n=1$, there is a left $\mathcal{P(A)}$-approximation $\begin{tikzcd}[column sep=0.5cm]
0 \arrow[r] & M\cong M_1\oplus M_2\arrow[r,"f"] & P.
\end{tikzcd}$ Let $i$ be the canonical inclusion from $M_1$ to $M$ and corresponding projection.
Consider the monomorphism
$\begin{tikzcd}[column sep=0.5cm]
0 \arrow[r] & M_1\arrow[r,"f\cdot i"] & P.
\end{tikzcd}$
For any homomorphism $g_1: M_1\rightarrow P_1$ with $P_1$ projective, there exists a homomorphism $h:P\rightarrow P_1$ such that $g_1\cdot \pi=h\cdot f$ since $f$ is a left $\mathcal{P}(\mathcal{A})$-approximation of $M$.
\[
\begin{tikzcd}[column sep=0.7cm,row sep=0.7cm]
& P_1 & \\
0\arrow[r] & M_1 \arrow[u,"g_1"]\arrow[r,"f\cdot i"] & P\arrow[ul,dashed,"h"'] \\
& M \arrow[ur,"f"] \arrow[u,"\pi"] &
\end{tikzcd}.
\]
Then $g_1=g_1\cdot (\pi\cdot i)=h\cdot (f\cdot i)$, and we have $f\cdot i: M_1\rightarrow P$ is a left $\mathcal{P}(\mathcal{A})$-approximation of $M_1$. It follows that $M_1$ is $1$-torsionfree.

Now, we suppose that $n>1$ and $M\cong M_1\oplus M_2$ is $n$-torsionfree. Then there exists a short exact sequence
$\begin{tikzcd}[column sep=0.5cm]
0 \arrow[r] & M\cong M_1\oplus M_2\arrow[r,"f"] & P\arrow[r] & N\arrow[r] &0
\end{tikzcd}$
where $f$ is a left $\mathcal{P}(\mathcal{A})$-approximation and $N$ is $(n-1)$-torsionfree. By the proof of the case of $n=1$, there are two exact sequences
$\begin{tikzcd}[column sep=0.5cm]
0 \arrow[r] & M_1\arrow[r,"f\cdot i_1"] & P\arrow[r] & N_1\arrow[r] &0
\end{tikzcd}$
and
$\begin{tikzcd}[column sep=0.5cm]
0 \arrow[r] & M_2\arrow[r,"f\cdot i_2"] & P\arrow[r] & N_2\arrow[r] &0
\end{tikzcd}$,
such that $f\cdot i_1$ and $f\cdot i_2$ are left $\mathcal{P}(\mathcal{A})$-approximations of $M_1$ and $M_2$, respectively.
Then we have the following exact sequence

\[
\begin{tikzcd}[
  column sep=1.2cm,
  ampersand replacement=\&
]
0 \arrow[r]
  \& M_1\oplus M_2
  \arrow[
    r,
    description,
    "{\left(
      \begin{smallmatrix}
        f\cdot i_1 & 0\\
        0 & f\cdot i_2
      \end{smallmatrix}
    \right)}"
  ]
  \& P\oplus P
  \arrow[r]
  \& N_1\oplus N_2
  \arrow[r]
  \& 0
\end{tikzcd}
\]
with $\begin{pmatrix}f\cdot i_1 &0\\ 0& f\cdot i_2\end{pmatrix}$ a left $\mathcal{P}(\mathcal{A})$-approximation of $M$. It follows from Lemma \ref{lem:schanuel} that $P\oplus N_1\oplus N_2\cong P\oplus P\oplus N$. Since $P\oplus P\oplus N$ is $(n-1)$-torsionfree, we have $N_1$ is $(n-1)$-torsionfree by inductive hypothesis. Then there exists an exact sequence
$\begin{tikzcd}[column sep=0.5cm]
0 \arrow[r] & N_1\arrow[r,"f_1"] & P_1\arrow[r,"f_2"] & \cdots \arrow[r,"f_{n-1}"] & P_{n-1}
\end{tikzcd}$
with each $P_i$ projective, such that $\operatorname{Im} f_i \rightarrow P_i$ is a left $\mathcal{P}(\mathcal{A})$-approximation of $\operatorname{Im} f_i$ for $1\leq i\leq n-1$. Consider the exact sequence
$\begin{tikzcd}[column sep=0.5cm]
0 \arrow[r] & M_1 \arrow[r,"f"] & P\arrow[r,"f_1"] & P_1\arrow[r,"f_2"] & \cdots \arrow[r,"f_{n-1}"] & P_{n-1}
\end{tikzcd}$,
where $f:M_1(\cong\operatorname{Im}f)\rightarrow P$ is a left $\mathcal{P}(\mathcal{A})$-approximation and $\operatorname{Im} f_i \rightarrow P_i$ is a left $\mathcal{P}(\mathcal{A})$-approximation of $\operatorname{Im} f_i$ for $1\leq i\leq n-1$. Hence $M_1$ is $n$-torsionfree.
\end{proof}

If $\A$ has small coproducts and these preserve short exact sequences,
then $\T^n(\A)$ is also closed under arbitrary coproducts. Indeed,
coproducts of projectives are projective, and the approximation property
can be checked on each summand.

The following observation gives a recurrence relation of $n$-torsionfreeness.

\begin{proposition}\label{prop:recurrence}
 Let $\begin{tikzcd}[column sep=0.5cm]
0 \arrow[r] & M \arrow[r,"f"] & P \arrow[r] & N \arrow[r] & 0
\end{tikzcd}$ be a short exact sequence in $
\mathcal{A}$ with $f$ a left $\mathcal{P(A)}$-approximation of $M$ and let $n\geq 1$ be an integer.  Then $M$ is $n$-torsionfree if and only if $N$ is $(n-1)$-torsionfree.
In particular, $M$ is infinitely torsionfree if and only if $N$ is.
\end{proposition}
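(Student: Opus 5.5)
Two directions are needed; the case $n=1$ is trivial on the right, since every object is $0$-torsionfree and $f$ is itself a monomorphic left $\Pj(\A)$-approximation, so $M\in\T^1(\A)$. Assume $n\geq2$ from now on.

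For ``if'': suppose $N$ is $(n-1)$-torsionfree. Choose an exact sequence $0\to N\xra{g_1}Q_1\to\cdots\xra{g_{n-1}}Q_{n-1}$ as in Definition~\ref{def:torsionfree}. Splice it with the given sequence to get
\[
0\to M\xra{f}P\xra{g_1\pi}Q_1\xra{g_2}\cdots\xra{g_{n-1}}Q_{n-1},
\]
where $\pi:P\to N$ is the cokernel. This is exact. The first image is $M$ with inclusion $f$, which is an approximation by hypothesis. The image of $g_1\pi$ is $\operatorname{Im}g_1\cong N$ with inclusion $g_1$. The remaining images are those of the $g_i$. All of these are approximations, so $M$ is $n$-torsionfree. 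This is exactly the splicing already used at the end of the proof of Proposition~\ref{prop:summands}.

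For ``only if'': suppose $M$ is $n$-torsionfree. Take a witnessing sequence $0\to M\xra{f_1}P_1\xra{f_2}\cdots\xra{f_n}P_n$ and put $N_1=\Coker f_1$. Then $N_1$ is $(n-1)$-torsionfree, witnessed by the tail $0\to N_1\to P_2\to\cdots\to P_n$. Both $0\to M\xra{f}P\to N\to0$ and $0\to M\xra{f_1}P_1\to N_1\to0$ are approximation sequences, so Lemma~\ref{lem:schanuel} gives $P\oplus N_1\cong P_1\oplus N$. The left side is $(n-1)$-torsionfree by Proposition~\ref{prop:summands} (closure under sums), since projectives are infinitely torsionfree. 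Hence $N$, a direct summand of the right side, is $(n-1)$-torsionfree by Proposition~\ref{prop:summands} (closure under summands).

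The ``in particular'' follows by applying the statement for every $n$. The only point needing care is the Schanuel-type comparison in the ``only if'' direction, which is handled entirely by Lemma~\ref{lem:schanuel} together with Proposition~\ref{prop:summands}. No real obstacle remains beyond checking that the spliced sequence is exact with the correct images.
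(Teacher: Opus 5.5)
Your proposal is correct and follows essentially the same route as the paper. For ``only if'' you use Lemma~\ref{lem:schanuel} to get $P\oplus N_1\cong P_1\oplus N$ and then closure of $\T^{n-1}(\A)$ under sums and summands (Proposition~\ref{prop:summands}); for ``if'' you splice the given approximation sequence onto a defining sequence for $N$. Your explicit check of the images and exactness in the spliced sequence just spells out what the paper leaves implicit.
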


\begin{proof} Suppose that $M$ is $n$-torsionfree. The case of $n=1$ is trivial.

If $n\geq 2$, there exists an exact sequence $\begin{tikzcd}[column sep=0.5cm]
0 \arrow[r] & M \arrow[r,"f_1"] & P_1 \arrow[r,"f_2"] & \cdots \arrow[r,"f_n"] & P_n
\end{tikzcd}$ with $P_i\in\mathcal{P(A)}$, such that each ${\rm Im} f_i \rightarrow P_i$ is a left $\mathcal{P(A)}$-approximation of ${\rm Im} f_i$ for $1\leq i\leq n$.
Consider the short exact sequence $\begin{tikzcd}[column sep=0.5cm]
0 \arrow[r] & M \arrow[r,"f_1"] & P_1 \arrow[r] & N_1 \arrow[r] & 0
\end{tikzcd}$, where $f_1$ is a left $\mathcal{P(A)}$-approximation of $M$ and $N_1={\rm Coker}f_1$. It is easy to see that $N_1$ is $(n-1)$-torsionfree. By Lemma \ref{lem:schanuel}, we get $P_1\oplus N\cong P\oplus N_1$. Since $P\oplus N_1$ is $(n-1)$-torsionfree, we have that $N$ is also $(n-1)$-torsionfree by the proposition above.

 Conversely, prepend the given approximation sequence to a defining
sequence for the $(n-1)$-torsionfreeness of $N$. This gives
$n$-torsionfreeness of $M$ by Definition~\ref{def:torsionfree}.
Taking all $n$ yields the last assertion.
\end{proof}

\subsection{Objects of G-dimension zero}
As the origin of Gorenstein homological algebra, in the category of finitely generated modules over a two-sided noetherian ring, the module of G-dimension zero was introduced by Auslander and Bridger in \cite{AB}. We extend the definition to abelian categories with enough projectives.
\begin{definition} Let $\mathcal{A}$ be an abelian category with enough projective objects. An object $M$ is said to be of {\em G-dimension zero}, denoted by G-dim$(M)=0$, if the following conditions hold:
$(1)$ ${\rm Ext}^i_{\mathcal{A}}(M, \mathcal{P(A)})=0$ for all $i>0$; $(2)$ $M$ is $\infty$-torsionfree.
\end{definition}

Recall that an acyclic complex\[
\begin{tikzcd}[column sep=0.5cm]
P^\bullet = & \cdots \arrow[r] & P_1 \arrow[r] & P_0 \arrow[r] & P^0 \arrow[r] & P^1 \arrow[r] & \cdots
\end{tikzcd}
\]
of projective objects is said to be {\em totally acyclic}, provided it remains acyclic after applying ${\rm Hom}_{\mathcal{A}}(-, P)$ for any projective object $P\in\mathcal{A}$. An object $M\in\mathcal{A}$ is called {\em Gorenstein projective} if there is a totally acyclic complex $P^\bullet$ such that $M$ is isomorphic to its zeroth cocycle $Z^0(P^\bullet)$; see \cite{EJ}. We denote by $\GP(\A)$ the full subcategory of $\mathcal{A}$ consisting  of all Gorenstein projective objects.

Over a two-sided Noetherian ring, \cite[Theorem 4.2.6]{Chris} shows that a finitely generated module is Gorenstein projective if and only if it is a module of G-dimension zero. The following result generalizes this to a general abelian category with enough projective objects.

\begin{proposition}\label{prop:gp}
An object is Gorenstein projective if and only if it has G-dimension
zero. Equivalently,
\[
 \GP(\A)=\orth{\A}\cap\T^\infty(\A).
\]
\end{proposition}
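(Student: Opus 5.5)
The equality $\GP(\A)=\orth{\A}\cap\T^\infty(\A)$ is just a restatement of ``Gorenstein projective iff G-dimension zero'', since G-dimension zero means exactly membership in $\orth{\A}\cap\T^\infty(\A)$. So I will prove the two inclusions directly, using Lemma~\ref{lem:ext-characterization} for the approximation property and Proposition~\ref{prop:recurrence} to move along the coresolution.

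\emph{Gorenstein projective implies G-dimension zero.} Let $M\cong Z^0(P^\bullet)$ for a totally acyclic complex $P^\bullet$ of projectives.
\begin{enumerate}
\item The left half $\cdots\to P_1\to P_0\to M\to 0$ is a projective resolution of $M$. Since $\Hom_\A(P^\bullet,P)$ is acyclic for every projective $P$, we get $\Ext^i_\A(M,P)=0$ for all $i\geq1$. So $M\in\orth{\A}$.
\item Every cocycle $Z^j=Z^j(P^\bullet)$ is again the zeroth cocycle of a shift of $P^\bullet$. Hence the same argument gives $\Ext^1_\A(Z^j,P)=0$ for all $j$ and all projective $P$.
\item Consider the short exact sequences $0\to Z^j\to P^j\to Z^{j+1}\to 0$. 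Applying $\Hom_\A(-,P)$ and using $\Ext^1_\A(Z^{j+1},P)=0$ shows that every map $Z^j\to P$ extends to $P^j$. So each inclusion $Z^j\to P^j$ is a left projective approximation.
\item The right half $0\to M\to P^0\to P^1\to\cdots$ therefore satisfies Definition~\ref{def:torsionfree} for every $n$, and $M\in\T^\infty(\A)$.
\end{enumerate}

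\emph{G-dimension zero implies Gorenstein projective.}
\begin{enumerate}
\item Let $M\in\orth{\A}\cap\T^\infty(\A)$. Choose a projective resolution $\cdots\to P_1\to P_0\to M\to0$. Since $M\in\orth{\A}$, applying $\Hom_\A(-,P)$ to it gives an exact complex.
\item For the right half, build the coresolution inductively rather than taking one from a fixed $n$. Since $M$ is $1$-torsionfree, there is an approximation sequence $0\to M\xra{f}P^0\to N^1\to0$.
\item By Proposition~\ref{prop:recurrence}, $N^1$ is again infinitely torsionfree.
\item $N^1$ also lies in $\orth{\A}$. The sequence $0\to\Hom(N^1,P)\to\Hom(P^0,P)\to\Hom(M,P)\to\Ext^1(N^1,P)\to0$ is exact, and the map $\Hom(P^0,P)\to\Hom(M,P)$ is surjective because $f$ is an approximation. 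Hence $\Ext^1(N^1,P)=0$. For $i\geq1$, $\Ext^{i+1}(N^1,P)\cong\Ext^i(M,P)=0$.
\item Iterating, every cokernel $N^j$ lies in $\orth{\A}\cap\T^\infty(\A)$, and every map $N^j\to P^j$ is a left projective approximation.
\item Splicing the two halves gives an acyclic complex of projectives. It stays exact under $\Hom_\A(-,P)$: on the left this follows from the Ext vanishing of $M$, and on the right from the approximation property at each cocycle together with $\Ext^1(N^{j+1},P)=0$. Hence $M$ is Gorenstein projective.
\end{enumerate}

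The main obstacle is the bookkeeping in the converse. Infinite torsionfreeness gives only a separate finite approximation sequence for each $n$, not a single infinite coresolution. This is why the inductive construction via Proposition~\ref{prop:recurrence} is needed, together with the check that the cokernels stay in $\orth{\A}$. Exactness of $\Hom(-,P)$ at the junction $P_0\to P^0$ also needs checking. Both the surjectivity of $\Hom(P^0,P)\to\Hom(M,P)$ and exactness at $\Hom(P_0,P)$ follow from the approximation property and the left exactness of $\Hom$.
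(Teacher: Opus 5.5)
Your proof is correct and follows essentially the same route as the paper. In the forward direction, cocycles of a totally acyclic complex give a $\Hom_{\A}(-,Q)$-exact projective resolution and approximation sequences of every length. Conversely, you build the coresolution inductively via Proposition~\ref{prop:recurrence}, use the approximation property and dimension shifting to keep each cokernel in $\orth{\A}$, and splice with a projective resolution; your explicit check of exactness at the junction $\Hom_{\A}(P_0,Q)$ is a detail the paper leaves implicit.
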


\begin{proof}
A totally acyclic complex gives both a projective resolution which is
$\Hom_{\A}(-,Q)$-exact and approximation sequences of every finite
length. Thus each of its cocycles belongs to
$\orth{\A}\cap\T^\infty(\A)$.

Conversely, let $M=T_0\in\orth{\A}\cap\T^\infty(\A)$ and choose
an approximation sequence $0\to T_0\to P_1\to T_1\to0$.
Proposition~\ref{prop:recurrence} gives $T_1\in\T^\infty(\A)$.
The approximation property gives $\Ext^1_{\A}(T_1,Q)=0$, while
dimension shifting gives
\[
 \Ext^{j+1}_{\A}(T_1,Q)\simeq\Ext^j_{\A}(M,Q)=0
 \qquad(j\geq1).
\]
Hence $T_1\in\orth{\A}$. Repeating this construction produces
compatible approximation sequences
\[
 0\longrightarrow T_{i-1}\longrightarrow P_i\longrightarrow T_i
 \longrightarrow0\qquad(i\geq1).
\]
Splice them with a projective resolution of $M$. The resulting complex
is acyclic and remains so under $\Hom_{\A}(-,Q)$: on the right this
follows from the approximation property, and on the left from
$M\in\orth{\A}$ and dimension shifting. Thus $M$ is Gorenstein
projective.
\end{proof}

\section{Frobenius functors and torsionfreeness}\label{sec:functors}

We study preservation and reflection of torsionfreeness under exact
adjoint functors, and then specialize to Frobenius functors.
\subsection{Exact adjoint pairs}

Throughout this subsection, $F:\A\rightleftarrows\B:G$ denotes a pair
of exact additive functors, with $F$ left adjoint to $G$.
Then $F$ preserves projectives, since
$\Hom_{\B}(F(P),-)\simeq\Hom_{\A}(P,G(-))$ is exact for
$P\in\Pj(\A)$.

\begin{lemma}\label{lem:adjunction}
Let $F:\A\rightleftarrows\B:G$ be an exact adjoint pair, with $F$ left adjoint to $G$, and assume that $G(\Pj(\B))\subseteq\Pj(\A)$. Then $F$ preserves left projective
approximations and monomorphic left projective approximations.
For every $j\geq1$, one has
\[
 \Ext^j_{\A}(X,\Pj(\A))=0
 \quad\Longrightarrow\quad
 \Ext^j_{\B}(F(X),\Pj(\B))=0.
\]
This implication is an equivalence if
 $\Pj(\A)=\add G(\Pj(\B))$.

\end{lemma}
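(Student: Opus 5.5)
The whole argument rests on the adjunction isomorphism $\Hom_{\B}(F(X),Q)\simeq\Hom_{\A}(X,G(Q))$ together with its derived version. Since $F$ and $G$ are exact and $F$ preserves projectives (noted just before the statement), $F$ carries a projective resolution $P_\bullet\to X$ in $\A$ to a projective resolution $F(P_\bullet)\to F(X)$ in $\B$. Applying the adjunction termwise, the complexes $\Hom_{\B}(F(P_\bullet),Q)$ and $\Hom_{\A}(P_\bullet,G(Q))$ are isomorphic. Taking cohomology gives
\[
 \Ext^j_{\B}(F(X),Q)\simeq\Ext^j_{\A}(X,G(Q))\qquad(j\geq0,\ Q\in\B).
\]
I would establish this isomorphism first, since every later step uses it.

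For the approximation statement, let $f:M\to P$ be a left projective approximation in $\A$. Then $F(f):F(M)\to F(P)$ has projective target. Take any morphism $u:F(M)\to Q$ with $Q\in\Pj(\B)$. Its adjoint $u^\flat:M\to G(Q)$ lands in a projective object, because $G(Q)\in\Pj(\A)$ by hypothesis. So $u^\flat$ factors as $u^\flat=h f$ for some $h:P\to G(Q)$. By naturality of the adjunction, $u=h^\sharp\circ F(f)$, where $h^\sharp:F(P)\to Q$ is the adjoint of $h$. Hence $F(f)$ is a left projective approximation. Monomorphicity is preserved because $F$ is exact.

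For the Ext statement, suppose $\Ext^j_{\A}(X,\Pj(\A))=0$. For each $Q\in\Pj(\B)$, the displayed isomorphism gives $\Ext^j_{\B}(F(X),Q)\simeq\Ext^j_{\A}(X,G(Q))$, and the right side vanishes because $G(Q)$ is projective. For the converse under $\Pj(\A)=\add G(\Pj(\B))$, take $P\in\Pj(\A)$. Then $P$ is a direct summand of some $G(Q_1)\oplus\cdots\oplus G(Q_r)\simeq G(Q_1\oplus\cdots\oplus Q_r)$, using that $G$ is additive. Since $\Ext^j_{\A}(X,-)$ is additive, $\Ext^j_{\A}(X,P)$ is a direct summand of $\Ext^j_{\A}(X,G(Q))\simeq\Ext^j_{\B}(F(X),Q)=0$.

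The main obstacle is checking carefully that the adjunction isomorphism passes to Ext, i.e.\ that it is natural enough to commute with the differentials of the resolution. This holds because the adjunction is natural in the first variable. Nothing else is delicate.
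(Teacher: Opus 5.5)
Your proposal is correct and takes essentially the same route as the paper: you transport the factorization through the adjunction using $G(\Pj(\B))\subseteq\Pj(\A)$ and naturality, and you apply $F$ to a projective resolution to get $\Ext^j_{\B}(F(X),Q)\simeq\Ext^j_{\A}(X,G(Q))$, with additivity of $\Ext$ giving the converse when $\Pj(\A)=\add G(\Pj(\B))$. You are also a bit more explicit than the paper about why $F(P)$ is projective and why the termwise adjunction isomorphism commutes with the differentials.
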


\begin{proof}
Let $a:X\to P$ be a left projective approximation. A morphism
$F(X)\to Q$, with $Q$ projective, corresponds under adjunction to
a morphism $X\to G(Q)$. The latter factors through $a$, since
$G(Q)$ is projective. Naturality of adjunction then gives a
factorization through $F(a)$. Exactness of $F$ preserves
monomorphisms.

Applying $F$ to a projective resolution and using adjunction gives
\begin{equation}\label{eq:ext-adjunction}
 \Ext^j_{\B}(F(X),Q)\simeq\Ext^j_{\A}(X,G(Q)).
\end{equation}
This proves the implication. If
 $\Pj(\A)=\add G(\Pj(\B))$,
additivity of $\Ext$ gives the converse.
\end{proof}

\begin{theorem}\label{thm:adjoint-transfer}
Let $F:\A\rightleftarrows\B:G$ be an exact adjoint pair, with $F$ left adjoint to $G$, and assume that $G(\Pj(\B))\subseteq\Pj(\A)$. Then $F$ preserves $n$-torsionfree
objects for all $n\geq1$. If $F$ is faithful and
$\Pj(\A)=\add G(\Pj(\B))$, then
\[
 X\in\T^n(\A)\quad\Longleftrightarrow\quad F(X)\in\T^n(\B).
\]
The same preservation and reflection statements hold for infinite
torsionfreeness.
\end{theorem}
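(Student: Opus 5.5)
Preservation follows directly from Lemma~\ref{lem:adjunction}. Let $X\in\T^n(\A)$ and take a defining sequence $0\to X\xra{f_1}P_1\xra{f_2}\cdots\xra{f_n}P_n$ as in Definition~\ref{def:torsionfree}. Since $F$ is exact, $F(\operatorname{Im}f_i)\simeq\operatorname{Im}F(f_i)$ and the image sequence is still exact. The lemma shows that $F$ preserves projectives and monomorphic left projective approximations. Hence each inclusion $\operatorname{Im}F(f_i)\to F(P_i)$ is a left $\Pj(\B)$-approximation, and $F(X)\in\T^n(\B)$. Alternatively, one can use Lemma~\ref{lem:ext-characterization}: apply $F$ to \eqref{eq:finite-coresolution} and use the Ext-implication of Lemma~\ref{lem:adjunction} for $T_n$. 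Since the statement holds for every $n$, preservation of infinite torsionfreeness follows.

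For reflection, assume $F$ is faithful, $\Pj(\A)=\add G(\Pj(\B))$, and $F(X)\in\T^n(\B)$. I argue by induction on $n$ via Proposition~\ref{prop:recurrence}. The real issue is constructing an approximation sequence of $X$ itself, and I expect this to be the main obstacle. Because $\A$ need not have left approximations in general, I use the unit instead. Take a monomorphic left approximation $a:F(X)\to Q$ in $\B$ and the composite
\[
 u:X\xra{\eta_X}GF(X)\xra{G(a)}G(Q).
\]
Here $G(Q)$ is projective. I claim $u$ is a left $\Pj(\A)$-approximation. Every projective of $\A$ is a summand of some $G(Q')$, so it suffices to factor maps $X\to G(Q')$. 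Such a map corresponds to a map $F(X)\to Q'$. That map factors through $a$, and transporting the factorization back through the adjunction gives a factorization through $u$. Next I claim $u$ is a monomorphism. Faithfulness of $F$ implies that $\eta_X$ is monic. Indeed, if $\eta_X h=0$ then the counit identity gives $F(h)=\varepsilon_{F X}F(\eta_X)F(h)=0$, so $h=0$. Also $G(a)$ is monic because $G$ is exact. So $0\to X\xra{u}G(Q)\to N\to0$ is an approximation sequence in $\A$, and this settles $n=1$.

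For $n\geq2$, apply $F$ to this sequence. By the first part, $0\to F(X)\to FG(Q)\to F(N)\to0$ is an approximation sequence in $\B$. Since $F(X)\in\T^n(\B)$, Proposition~\ref{prop:recurrence} applied in $\B$ gives $F(N)\in\T^{n-1}(\B)$. The induction hypothesis then gives $N\in\T^{n-1}(\A)$, and Proposition~\ref{prop:recurrence} applied in $\A$ gives $X\in\T^n(\A)$.

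The infinite case follows by applying the finite statement for every $n$, since $\T^\infty$ is the intersection of the $\T^n$.
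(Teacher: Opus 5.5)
Your proposal is correct and follows essentially the same route as the paper. Both arguments use the adjoint $G(a)\eta_X$ of a monomorphic approximation of $F(X)$, check the approximation property through the adjunction, and induct on $n$. The differences are cosmetic. You obtain monicity from the unit being monic and $G$ being exact, whereas the paper uses $\varepsilon_QF(a)=a$ and the fact that a faithful exact $F$ reflects monomorphisms. You also invoke Proposition~\ref{prop:recurrence} where the paper spells out the Schanuel-plus-summands step that underlies it.
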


\begin{proof}
Preservation follows by applying $F$ to approximation sequences and
using Lemma~\ref{lem:adjunction}.

For reflection, suppose that $F$ is faithful and
if
 $\Pj(\A)=\add G(\Pj(\B))$ holds. Let $F(X)\in\T^n(\B)$ and
choose an approximation sequence
\[
 0\longrightarrow F(X)\xra{u}Q\longrightarrow T\longrightarrow0,
 \qquad Q\in\Pj(\B),\quad T\in\T^{n-1}(\B).
\]
Write $\eta:\Id_{\A}\to GF$ and $\varepsilon:FG\to\Id_{\B}$ for
the unit and counit. The morphism adjoint to $u$ is
\[
 a=G(u)\eta_X:X\longrightarrow G(Q).
\]
Since $\varepsilon_Q F(a)=u$ is monic, $F(a)$ is monic. Exactness
and faithfulness of $F$ imply that $a$ is monic.

We show that $a$ is a left projective approximation. Let
$f:X\to G(Q')$ with $Q'$ projective, and let
$\widetilde f:F(X)\to Q'$ be its adjoint. Write $\widetilde f=vu$.
Then
\[
 f=G(\widetilde f)\eta_X=G(v)G(u)\eta_X=G(v)a.
\]
By $\Pj(\A)=\add G(\Pj(\B))$, every projective $P\in\A$ is a direct summand of some $G(Q')$.
Composing with the corresponding inclusion and projection proves the
same factorization for a map $X\to P$.

Set $X'=\Coker a$. By Lemma~\ref{lem:adjunction}, the sequence
\[
 0\longrightarrow F(X)\xra{F(a)}FG(Q)
 \longrightarrow F(X')\longrightarrow0
\]
is an approximation sequence. Lemma~\ref{lem:schanuel} gives
\begin{equation}\label{eq:cokernel-comparison}
 F(X')\oplus Q\simeq FG(Q)\oplus T.
\end{equation}
For $n=1$, the sequence defined by $a$ already proves the assertion.
For $n>1$, Proposition~\ref{prop:summands} and
\eqref{eq:cokernel-comparison} give $F(X')\in\T^{n-1}(\B)$.
Induction gives $X'\in\T^{n-1}(\A)$ and hence $X\in\T^n(\A)$.

Taking all $n$ proves the assertion for infinite torsionfreeness.
\end{proof}

\begin{corollary}\label{adjoint tranfer GP} Let $F:\A\rightleftarrows\B:G$ be an exact adjoint pair, with $F$ left adjoint to $G$, and assume that $G(\Pj(\B))\subseteq\Pj(\A)$. If $X\in\A$ is Gorenstein projective, then so is $F(X)$. Moreover, if $F$ is faithful and
$\Pj(\A)=\add G(\Pj(\B))$, then $X\in\mathcal{GP(A)} $ if and only if $F(X)\in \mathcal{GP(B)}$.
\end{corollary}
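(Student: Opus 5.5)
The plan is to combine Proposition~\ref{prop:gp}, which identifies $\GP(\A)=\orth{\A}\cap\T^\infty(\A)$, with the two transfer results just proved. Torsionfreeness is handled by Theorem~\ref{thm:adjoint-transfer}, and the Ext-orthogonality condition by Lemma~\ref{lem:adjunction}. Both apply under the same hypotheses as the corollary: $F,G$ exact, $F\dashv G$, $G(\Pj(\B))\subseteq\Pj(\A)$, and, for reflection, $F$ faithful with $\Pj(\A)=\add G(\Pj(\B))$.

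\emph{Preservation.} Let $X\in\GP(\A)$. By Proposition~\ref{prop:gp}, $X\in\orth{\A}$ and $X\in\T^\infty(\A)$.

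First, $X\in\orth{\A}$ means $\Ext^j_{\A}(X,\Pj(\A))=0$ for all $j\geq1$. Lemma~\ref{lem:adjunction}, applied for each $j$, gives $\Ext^j_{\B}(F(X),\Pj(\B))=0$, so $F(X)\in\orth{\B}$.

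Second, the infinite-torsionfree clause of Theorem~\ref{thm:adjoint-transfer} gives $F(X)\in\T^\infty(\B)$.

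Applying Proposition~\ref{prop:gp} in $\B$ then yields $F(X)\in\GP(\B)$.

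\emph{Reflection.} Now assume in addition that $F$ is faithful and $\Pj(\A)=\add G(\Pj(\B))$, and let $F(X)\in\GP(\B)$. By Proposition~\ref{prop:gp}, $F(X)\in\orth{\B}\cap\T^\infty(\B)$.

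Under the equality $\Pj(\A)=\add G(\Pj(\B))$, the implication in Lemma~\ref{lem:adjunction} is an equivalence. This follows from \eqref{eq:ext-adjunction} together with additivity of $\Ext$ on direct summands. Hence $\Ext^j_{\A}(X,\Pj(\A))=0$ for all $j\geq1$, i.e.\ $X\in\orth{\A}$.

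The reflection part of Theorem~\ref{thm:adjoint-transfer}, in its $\infty$ form, gives $X\in\T^\infty(\A)$. One point needs care: an object in $\T^\infty$ lies in $\T^n$ for every $n$. The reflection statement for each finite $n$ gives $X\in\T^n(\A)$ for all $n$, which is exactly $X\in\T^\infty(\A)$. Therefore $X\in\GP(\A)$ by Proposition~\ref{prop:gp}.

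There is no real obstacle here, since all the work sits in the earlier results. The only step deserving a check is this passage from levelwise reflection to $\T^\infty$, and it is immediate because $\T^\infty(\A)$ is defined as the intersection $\bigcap_{n\geq1}\T^n(\A)$.
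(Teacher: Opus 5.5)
Your proposal is correct and follows the paper's own proof. The paper likewise splits $\GP$ as $\orth{\A}\cap\T^\infty(\A)$ via Proposition~\ref{prop:gp}. It then transfers the Ext-orthogonality part with Lemma~\ref{lem:adjunction} (an equivalence when $\Pj(\A)=\add G(\Pj(\B))$) and the infinite-torsionfreeness part with Theorem~\ref{thm:adjoint-transfer}.
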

\begin{proof} This follows from
Lemma~\ref{lem:adjunction} and Proposition~\ref{prop:gp} and Theorem \ref{thm:adjoint-transfer}.
\end{proof}

\subsection{Frobenius pairs}
For the definitions and basic properties of Frobenius pairs and Frobenius functors, see \cite{CDM,CGN,CR}.
Let  $F:\mathcal{A}\rightarrow\mathcal{B}$ and $G:\mathcal{B}\rightarrow\mathcal{A}$ be two additive functors. Assume that $\alpha:\mathcal{A}\rightarrow\mathcal{A}$ and $\beta:\mathcal{B}\rightarrow\mathcal{B}$ are two autoequivalences. We say that $(F,G)$ is a {\em Frobenius pair} of type $(\alpha, \beta)$ between $\mathcal{A}$ and $\mathcal{B}$, provided that both $(F,G)$ and $(G,\beta F\alpha)$ are adjoint pairs. We call the functor $F$ a {\rm Frobenius functor}, if it fits into a Frobenius pair $(F,G)$. In this case, the functor $G$ is also a Frobenius functor. In other words, Frobenius functors always appear in pairs.

By a {\em classical Frobenius pair} $(F,G)$, we mean a Frobenius pair of type $({\rm Id}_{\mathcal{A}},{\rm Id}_{\mathcal{B}})$. That is, both $(F,G)$ and $(G,F)$ are adjoint pairs.
It is clear that autoequivalences preserve projectives, Ext-orthogonality and all
torsionfree classes.

\begin{lemma}\label{lem:frobenius}
Frobenius functors are exact and preserve projectives and injectives.
If $F$ is faithful, then the unit $\eta:\Id_{\A}\to GF$ is
monomorphic and $\Pj(\A)=\add G(\Pj(\B))$.
\end{lemma}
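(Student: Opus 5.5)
Let $(F,G)$ be a Frobenius pair of type $(\alpha,\beta)$, so $F\dashv G$ and $G\dashv\beta F\alpha$. I will only use these two adjunctions and the fact that autoequivalences are exact and preserve projectives and injectives.

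\emph{Exactness.} $F$ is a left adjoint, so it is right exact. It is also a right adjoint, since $\beta^{-1}G\alpha^{-1}\dashv F$: from $G\dashv\beta F\alpha$ and the autoequivalences we get
\[
\Hom_{\A}(\beta^{-1}G\alpha^{-1}Y,X)\simeq\Hom_{\B}(Y,\alpha F\beta X),
\]
after reindexing. Hence $F$ is left exact as well. For $G$ we argue directly: it is a right adjoint of $F$ and a left adjoint of $\beta F\alpha$, so it is exact.

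\emph{Projectives and injectives.} A left adjoint of an exact functor preserves projectives, because $\Hom(FP,-)\simeq\Hom(P,G-)$ is a composite of exact functors. Dually, a right adjoint of an exact functor preserves injectives. This gives the following.
\begin{itemize}
\item $F$ preserves projectives, since $F\dashv G$ with $G$ exact.
\item $G$ preserves projectives, since $G\dashv\beta F\alpha$ with $\beta F\alpha$ exact.
\item $G$ preserves injectives, since $F\dashv G$ with $F$ exact.
\item $\beta F\alpha$ preserves injectives, hence so does $F=\beta^{-1}(\beta F\alpha)\alpha^{-1}$.
\end{itemize}
Thus both Frobenius functors are exact and preserve projectives and injectives. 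Since the situation is symmetric, it is enough to treat $F$.

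\emph{Faithfulness.} Assume $F$ is faithful and let $\eta:\Id_{\A}\to GF$ be the unit. Let $K=\Ker\eta_X$ with inclusion $k$. Then $\eta_Xk=0$. By the triangle identity $\varepsilon_{FX}F(\eta_X)=\Id_{FX}$, so
\[
F(k)=\varepsilon_{FX}F(\eta_X)F(k)=\varepsilon_{FX}F(\eta_Xk)=0.
\]
Faithfulness gives $k=0$, so $K=0$ and $\eta_X$ is monic. (Equivalently, since $F$ is exact, $F(k)=0$ gives $F(K)=0$, hence $\Id_K=0$ and $K=0$.)

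\emph{Main step: $\Pj(\A)=\add G(\Pj(\B))$.} The inclusion $\supseteq$ holds because $G$ preserves projectives and $\Pj(\A)$ is closed under $\add$. For $\subseteq$, take $P\in\Pj(\A)$. The previous step gives a monomorphism $\eta_P:P\to GF(P)$, and $F(P)$ is projective, so $GF(P)\in G(\Pj(\B))$. The question is whether $\eta_P$ splits, and I expect this to be the main obstacle. I will obtain a retraction from the second adjunction, in one of two ways.
\begin{itemize}
\item Use the unit $\eta'$ of $G\dashv\beta F\alpha$ on $\B$, which is monic by the same faithfulness argument in reverse. Together with the counit $\varepsilon'_X:G\beta F\alpha X\to X$, this produces the retraction $P\to G(Q)$ directly.
\item Alternatively, choose an epimorphism $Q\to\beta F\alpha P$ with $Q\in\Pj(\B)$. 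Its transpose $G(\beta^{-1}Q\ldots)$ gives a morphism $G(Q')\to P$ with $Q'$ projective; I expect this to be epic.
\end{itemize}

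The cleanest route is the epimorphism. The counit $\varepsilon'_X:G\beta F\alpha X\to X$ is epic whenever $\beta F\alpha$ is faithful, and $\beta F\alpha$ is faithful because $F$ is (this is the dual of the unit argument). So $G(\beta F\alpha P)\to P$ is an epimorphism onto a projective, hence split. Since $\beta F\alpha P$ is projective in $\B$, this gives $P\in\add G(\Pj(\B))$.
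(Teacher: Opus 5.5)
Your proof is correct and follows essentially the paper's route: exactness and preservation of projectives and injectives from the two adjunctions, monicity of $\eta_X$ from the triangle identity plus faithfulness, and a split epimorphism onto $P$ from a counit that is epic by faithfulness. The paper uses the counit $HF(P)\to P$ of $H=\alpha G\beta\dashv F$ and then untwists by $\alpha^{-1}$, whereas your counit of $G\dashv\beta F\alpha$ at $P$ avoids that last step.

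Two small fixes are needed. The left adjoint of $F$ is $\alpha G\beta$; your displayed formula with $\beta^{-1}G\alpha^{-1}$ and $\alpha F\beta$ does not typecheck. You should also drop the first bulleted alternative, since monicity of the unit $\eta'$ would require $G$, not $F$, to be faithful.
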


\begin{proof}
Both functors have left and right adjoints, up to autoequivalences,
and hence are exact. Their adjoints are exact as well, so they
preserve projectives and injectives. The triangular identity makes
$F(\eta_X)$ split monic. Faithfulness and exactness of $F$ imply
that $\eta_X$ is monic.

Put $H=\alpha G\beta$. Then $(H,F)$ is an adjoint pair. If
$\delta:HF\to\Id_{\A}$ is the counit, $F(\delta_X)$ is split
epic, so $\delta_X$ is epic when $F$ is faithful.
For $P\in\Pj(\A)$, choose a projective epimorphism $Q\to F(P)$.
The composite $H(Q)\to HF(P)\xra{\delta_P}P$ is epic and splits.
Thus $\Pj(\A)=\add H(\Pj(\B))$. Applying $\alpha^{-1}$ and
using that both autoequivalences preserve projectives gives
$\Pj(\A)=\add G(\Pj(\B))$; compare \cite[Corollary~2.2]{CR}.
\end{proof}

\begin{corollary}\label{cor:frobenius-transfer}
Let $ F:\A\rightleftarrows\B:G$ with $(F,G)$ a Frobenius pair and $X\in\A$.

$(1)$ If $X\in\T^n(\A)$, then $ F(X)\in\T^n(\B)$ for any $n\geq 1$;

$(2)$ If $X\in\T^\infty(\A)$, then $ F(X)\in\T^{\infty}(\B)$;

$(3)$ If $X$ is Gorenstein projective in $\A$, then so is $F(X)$ in $\B$.

Moreover, if the Frobenius functor $F$ is faithful, then each implication in (1)-(3) is an equivalence.

That is, every Frobenius functor preserves finite and infinite torsionfreeness
and Gorenstein projectivity. A faithful Frobenius functor also
reflects these properties.
\end{corollary}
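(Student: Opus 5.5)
The plan is to reduce everything to Theorem~\ref{thm:adjoint-transfer} and Corollary~\ref{adjoint tranfer GP} by checking their hypotheses for a Frobenius pair. Let $(F,G)$ be a Frobenius pair of type $(\alpha,\beta)$. By definition $(F,G)$ is an adjoint pair with $F$ left adjoint to $G$. By Lemma~\ref{lem:frobenius}, both $F$ and $G$ are exact, and $G$ preserves projectives, so $G(\Pj(\B))\subseteq\Pj(\A)$. These are exactly the standing hypotheses of Theorem~\ref{thm:adjoint-transfer}.

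For the implications, (1) and (2) then follow directly from the preservation part of Theorem~\ref{thm:adjoint-transfer}, which covers both finite and infinite torsionfreeness. For (3), I would invoke the first assertion of Corollary~\ref{adjoint tranfer GP}. Alternatively, I could combine (2) with Lemma~\ref{lem:adjunction}: that lemma shows $\Ext^j_{\B}(F(X),\Pj(\B))=0$ for all $j\geq1$ whenever $X\in\orth{\A}$. Proposition~\ref{prop:gp} then identifies $\GP$ with $\orth{}\cap\T^\infty$, giving (3).

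For the converse statements, assume $F$ is faithful. The second half of Lemma~\ref{lem:frobenius} gives $\Pj(\A)=\add G(\Pj(\B))$. This is precisely the extra hypothesis needed in the reflection parts of Theorem~\ref{thm:adjoint-transfer} and Corollary~\ref{adjoint tranfer GP}. Hence $X\in\T^n(\A)$ if and only if $F(X)\in\T^n(\B)$, the same holds for $\T^\infty$, and $X\in\GP(\A)$ if and only if $F(X)\in\GP(\B)$.

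There is essentially no obstacle, since all the work sits in the earlier results. The only point needing care is the equality $\Pj(\A)=\add G(\Pj(\B))$ in the twisted case. Lemma~\ref{lem:frobenius} first obtains it for $H=\alpha G\beta$ and then transports it through the autoequivalences. So I would confirm that $\add$ and $\Pj$ are stable under $\alpha^{-1}$ and $\beta$, which holds because autoequivalences preserve projectives.
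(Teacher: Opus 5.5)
Your proposal is correct and is essentially the paper's argument. Lemma~\ref{lem:frobenius} supplies exactness, $G(\Pj(\B))\subseteq\Pj(\A)$, and, when $F$ is faithful, $\Pj(\A)=\add G(\Pj(\B))$; Theorem~\ref{thm:adjoint-transfer} and Corollary~\ref{adjoint tranfer GP} then give both preservation and reflection. The paper also remarks that the argument applies to $G$, but your setup already covers this, since $(G,\beta F\alpha)$ is itself a Frobenius pair with $G$ in the first slot.
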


\begin{proof}
Apply Lemma~\ref{lem:frobenius} and Theorem~\ref{thm:adjoint-transfer}.
The argument applies to either member of a Frobenius pair by composing
the adjunctions with the defining autoequivalences.
\end{proof}

\begin{remark}(1) The Gorenstein projective assertion is \cite[Theorem~3.2]{CR}.
Here it follows from finite torsionfreeness and Ext-orthogonality.
For related results on relative Gorenstein objects, see \cite{LM}.

(2) The faithfulness is
needed for reflection; compare \cite[Remark~3.5]{CR}. The projection $\A\times\A'\to\A$ and the inclusion
$X\mapsto(X,0)$ form a classical Frobenius pair.
Take $\A=\A'=\mathbb Z\Mod$ and a prime $p$.
The object $(0,\mathbb Z/p\mathbb Z)$ is not $1$-torsionfree,
but its image under the projection is zero. 

(3) The following example shows that the functor in Theorem \ref{thm:adjoint-transfer} need not be Frobenius.
\end{remark}

\begin{example}\label{ex:arrow}
Let $\Arr(\A)$ be the abelian category of morphisms in $\A$, and set
\[
 F(X)=(0\to X),\qquad G(Y\to Z)=Z.
\]
Then $(F, G)$ is an adjoint pair, both functors are exact, and $F$ is faithful.
The projectives of $\Arr(\A)$ are the direct summands of objects
\[
 (P\xra{1}P)\oplus(0\to Q),\qquad P,Q\in\Pj(\A).
\]
Indeed, maps from these objects are determined by maps from $P$ to the
source and from $Q$ to the target. For an arrow $f:Y\to Z$, projective
epimorphisms $p:P\to Y$ and $q:Q\to Z$ give an epimorphism
$(p,(fp,q))$ from the arrow $P\to P\oplus Q$ onto $f$.
Thus $G$ preserves projectives, and $GF=\Id_{\A}$ gives
that $\Pj(\A)=\add G(\Pj(\Arr(\A)))$.

For $\A=k\modu$, this pair is not Frobenius. Indeed, the module $k$ is
injective, whereas $(0\to k)$ is not, since the sequence
\[
 0\longrightarrow(0\to k)\longrightarrow(k\xra{1}k)
 \longrightarrow(k\to0)\longrightarrow0
\]
does not split. But, it is well-known that Frobenius functors preserve injectives.
\end{example}

\subsection{Bounded complexes}

Let $\A$ be an abelian category with enough projective objects. Then the category $C^b(\A)$ of bounded cochain complexes over $\A$ is again an abelian category, with kernels, cokernels, and exact sequences computed degreewise. In particular, a sequence of complexes in $C^b(\A)$ is exact if and only if the corresponding sequence is exact in each degree.

\begin{proposition}\label{prop:complexes} Let $n\geq 1$. In the abelian category $C^b(\A)$ of bounded cochain complexes with componentwise exact sequences, one has \[ X^\bullet\in\T^n(C^b(\A)) \quad\Longleftrightarrow\quad X^i\in\T^n(\A)\text{ for every }i. \] Moreover, \[ X^\bullet\in\T^\infty(C^b(\A)) \quad\Longleftrightarrow\quad X^i\in\T^\infty(\A)\text{ for every }i, \] and \[ X^\bullet\in\GP(C^b(\A)) \quad\Longleftrightarrow\quad X^i\in\GP(\A)\text{ for every }i. \]
 \end{proposition}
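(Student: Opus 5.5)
The natural route is to apply Theorem~\ref{thm:adjoint-transfer} and Corollary~\ref{adjoint tranfer GP} degree by degree. For each integer $i$ define $F_i:\A\to C^b(\A)$ sending $X$ to the complex $D_i(X)$ that has $X$ in degrees $i-1$ and $i$, joined by the identity, and zero elsewhere. Let $E_i:C^b(\A)\to\A$ be evaluation at degree $i$, $E_i(Y^\bullet)=Y^i$. Then:
\begin{itemize}
\item $F_i$ is left adjoint to $E_i$, since a chain map $D_i(X)\to Y^\bullet$ is determined by its component $X\to Y^{i-1}$;
\item $E_i$ has a left adjoint $D_{i+1}$, since a chain map $D_{i+1}(X)\to Y^\bullet$ is determined by its component $X\to Y^{i}$;
\item $E_i$ has a right adjoint $X\mapsto D_{i+1}(X)$, since a chain map $Y^\bullet\to D_{i+1}(X)$ is determined by its component $Y^i\to X$.
\end{itemize}
So I will work with the exact adjoint pair $(D_{i+1},E_i)$ and the pair $(E_i,D_{i+1})$, indexing carefully.

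The first step is to identify $\Pj(C^b(\A))$. It should equal $\add\{D_j(P)\mid P\in\Pj(\A),\,j\in\mathbb Z\}$, that is, bounded split contractible complexes with projective terms. Each $D_j(P)$ is projective because $\Hom(D_j(P),-)\cong\Hom_{\A}(P,(-)^{j-1})$ is exact. There are enough of them: for a bounded $Y^\bullet$ choose epimorphisms $P^j\to Y^j$, and the adjunction maps assemble into an epimorphism $\bigoplus_j D_{j+1}(P^j)\to Y^\bullet$, a finite sum by boundedness. Hence every projective object is a summand of such a sum. It follows that $E_i$ preserves projectives, and $\Pj(\A)=\add E_i(\Pj(C^b(\A)))$, since $E_i(D_{i+1}(P))=P$.

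For the forward implication, I apply the preservation part of Theorem~\ref{thm:adjoint-transfer} to the pair $(E_i,D_{i+1})$. Here $E_i$ is left adjoint and exact, $D_{i+1}$ is exact and preserves projectives, so $E_i$ preserves $\T^n$ and $\T^\infty$, and Gorenstein projectivity by Corollary~\ref{adjoint tranfer GP}.

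The converse is the main obstacle, because no single $E_i$ is faithful. I would instead use the functor $E=\prod_i E_i:C^b(\A)\to\A^{(\mathbb Z)}$ into graded objects, meaning finitely supported sequences. Its right adjoint sends $(X^i)$ to $\bigoplus_i D_{i+1}(X^i)$, and this sum is finite by bounded support. $E$ is exact and faithful, the right adjoint is exact and sends projectives to projectives, and the condition on projectives holds as above. Projectives in the graded category are the degreewise projective objects, and $n$-torsionfreeness there is plainly degreewise, because approximation sequences can be taken componentwise and conversely restrict to components. So Theorem~\ref{thm:adjoint-transfer} and Corollary~\ref{adjoint tranfer GP} give both directions at once. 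The point to check carefully is that $\A^{(\mathbb Z)}$ has enough projectives and that the degreewise characterization of $\T^n$, $\T^\infty$ and $\GP$ holds there; both are immediate from Proposition~\ref{prop:summands}-style componentwise arguments.
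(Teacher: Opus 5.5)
Your proof is correct and is essentially the paper's: the paper also uses the faithful forgetful functor $U:C^b(\A)\to\Gr^b(\A)$ (your $E$), packages it with its adjoints as a Frobenius pair $(U,R)$ of type $(\Id,\sigma^{-1})$ so that Corollary~\ref{cor:frobenius-transfer} applies, and then checks that approximations, $\T^n$, $\T^\infty$ and $\GP$ in $\Gr^b(\A)$ are componentwise. One indexing slip: a chain map $Y^\bullet\to D_{i+1}(X)$ is determined by its degree-$(i+1)$ component, so the right adjoint of $E_i$ is $D_i$ and that of $E$ is $(X^i)\mapsto\bigoplus_i D_i(X^i)$, not $D_{i+1}$ (the left and right adjoints differ by the shift recorded in the paper's type $(\Id,\sigma^{-1})$); since $D_i$ is equally exact and projective-preserving with $E_i(D_i(P))=P$, your argument goes through unchanged.
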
 

\begin{proof} Let \[ U:C^b(\A)\longrightarrow\Gr^b(\A) \] be the functor forgetting the differential, where $\Gr^b(\A)$ is the category of finitely supported graded objects. For $M\in\A$, denote by $D^i(M)$ the complex having $M$ in degrees $i$ and $i+1$, with identity differential between them. For $M^\bullet\in\Gr^b(\A)$, 
set \[ L(M^\bullet)=\bigoplus_iD^i(M^i), \qquad R(M^\bullet)=\bigoplus_iD^{i-1}(M^i). \] These direct sums are finite. 

A chain map $D^i(M)\to X^\bullet$ is uniquely determined by its component $M\to X^i$, and similarly a chain map $X^\bullet\to D^{i-1}(M)$ is uniquely determined by its component $X^i\to M$. Hence $(L,U)$ and $(U,R)$ are both adjoint pairs.

Let $\sigma$ be the shift on $\Gr^b(\A)$ defined by $(\sigma M)^i=M^{i+1}$. Then $R=L\sigma$. Since $(L,U)$ and $(R,\sigma^{-1}U)$ are both adjoint pairs, we have $(U,R)$ is a Frobenius pair of type $(\Id,\sigma^{-1})$. 

We next note that both $C^b(\A)$ and $\Gr^b(\A)$ have enough projectives. For $\Gr^b(\A)$ this follows componentwise. If $X^\bullet\in C^b(\A)$, choose projective epimorphisms $p^i:P^i\to X^i$. By the adjunction $L\dashv U$, the graded morphism $(p^i)_i:P^\bullet\to U(X^\bullet)$ induces a chain map \[ \bigoplus_iD^i(P^i)\longrightarrow X^\bullet. \] In degree $i$ its restriction to the summand $P^i$ is $p^i$, and hence this chain map is epic. Since each $D^i(P^i)$ is projective, $C^b(\A)$ has enough projectives.

 The functor $U$ is faithful. Thus Corollary~\ref{cor:frobenius-transfer} gives \[ X^\bullet\in\T^n(C^b(\A)) \quad\Longleftrightarrow\quad U(X^\bullet)\in\T^n(\Gr^b(\A)). \] 

It remains to describe torsionfreeness in $\Gr^b(\A)$. Projective objects and exact sequences in $\Gr^b(\A)$ are componentwise. Moreover, a morphism \[ f:M^\bullet\longrightarrow P^\bullet, \qquad P^\bullet\in\Pj(\Gr^b(\A)), \] is a left projective approximation if and only if each $f^i:M^i\to P^i$ is a left $\Pj(\A)$-approximation.

 Indeed, one direction follows by factoring morphisms componentwise, while the other can be checked by taking projective graded objects concentrated in a single degree. Consequently, \[ M^\bullet\in\T^n(\Gr^b(\A)) \quad\Longleftrightarrow\quad M^i\in\T^n(\A)\text{ for every }i. \] For the reverse implication, the defining approximation sequences of the finitely many nonzero components can be assembled degreewise in $\Gr^b(\A)$. This proves the first assertion. 

Taking the intersection over all $n\geq1$ gives \[ X^\bullet\in\T^\infty(C^b(\A)) \quad\Longleftrightarrow\quad X^i\in\T^\infty(\A)\text{ for every }i. \]

 Finally, Ext-orthogonality in $\Gr^b(\A)$ is also componentwise: \[ M^\bullet\in\orth{\Gr^b(\A)} \quad\Longleftrightarrow\quad M^i\in\orth{\A}\text{ for every }i. \] Combining this with the preceding description of $\T^\infty(\Gr^b(\A))$ and Proposition~\ref{prop:gp}, we obtain \[ M^\bullet\in\GP(\Gr^b(\A)) \quad\Longleftrightarrow\quad M^i\in\GP(\A)\text{ for every }i. \] Applying Corollary~\ref{cor:frobenius-transfer} to the faithful Frobenius functor $U$ gives the required assertion for Gorenstein projective objects. \end{proof}

\begin{remark}\label{rem:complexes}
A stalk complex on a nonzero projective object $P\in\Pj(\A)$ is Gorenstein projective in $C^b(\A)$ by Proposition~\ref{prop:complexes}, since all its components are Gorenstein projective in $\A$. It is, however, not projective in $C^b(\A)$. Indeed, projective complexes are direct summands of finite sums of projective disks $D^i(Q)$, and hence are contractible. On the other hand, a nonzero stalk complex cannot be contractible: its differential is zero, so a contracting homotopy would force $1_P=0$. Thus componentwise projectivity does not imply projectivity in $C^b(\A)$.
The Gorenstein projective part of Proposition~\ref{prop:complexes}
is \cite[Example~3.9]{CR}.

The support is allowed to vary in $C^b(\A)$. The assertion need not
hold in a fixed interval. For instance, complexes in $k\modu$
supported in degrees $0,1$ form $\Arr(k\modu)$. The object
$(k\to0)$ has projective terms but is not $1$-torsionfree, since
every subobject of a projective arrow is monic.
\end{remark}

\section{Weakly Gorenstein categories}\label{sec:weak}

 Ringel and Zhang  introduced weakly Gorenstein algebra as a generalization of Gorenstein algebras in \cite{RZ}. We extend this notion to  abelian categories with enough projective objects.

\begin{definition}\label{def:weak}
The category $\A$ is \emph{weakly Gorenstein} if
$\GP(\A)=\orth{\A}$. An Artin algebra $R$ is \emph{left weakly
Gorenstein} if $R\modu$ is weakly Gorenstein. The right-handed
definition uses $R^{\op}\modu$.
\end{definition}

For Artin algebras, this agrees with the definition in \cite{RZ},
where objects of $\orth{R\modu}$ are called semi-Gorenstein-projective
modules. The following is the categorical form of
\cite[Theorem~1.2]{RZ}.

\begin{proposition}\label{prop:first-level}
The following conditions are equivalent.
\begin{enumerate}
\item The category $\A$ is weakly Gorenstein.
\item Every object of $\orth{\A}$ is infinitely torsionfree.
\item For some $n\geq1$, every object of $\orth{\A}$ is $n$-torsionfree.
\item Every object of $\orth{\A}$ is $1$-torsionfree.
\end{enumerate}
\end{proposition}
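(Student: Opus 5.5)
We prove the cycle (1)$\Rightarrow$(2)$\Rightarrow$(3)$\Rightarrow$(4)$\Rightarrow$(1). The first three implications are formal.

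If $\A$ is weakly Gorenstein, then every $X\in\orth{\A}$ is Gorenstein projective. By Proposition~\ref{prop:gp}, $X$ then lies in $\T^\infty(\A)$, which gives (2). The implication (2)$\Rightarrow$(3) is trivial. For (3)$\Rightarrow$(4), Remark~\ref{rem:classical}(1) gives $\T^n(\A)\subseteq\T^1(\A)$ for $n\geq1$.

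The substantive step is (4)$\Rightarrow$(1). By Proposition~\ref{prop:gp}, $\GP(\A)=\orth{\A}\cap\T^\infty(\A)$, so it suffices to show $\orth{\A}\subseteq\T^n(\A)$ for every $n$. We argue by induction on $n$, proving simultaneously that each object of $\orth{\A}$ admits an approximation sequence whose cokernel lies again in $\orth{\A}$.

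Let $X\in\orth{\A}$. By (4), $X$ is $1$-torsionfree, so there is a short exact sequence
\[
 0\longrightarrow X\xra{f}P\longrightarrow X'\longrightarrow0
\]
with $f$ a left projective approximation. The approximation property says that $\Hom_{\A}(P,Q)\to\Hom_{\A}(X,Q)\to0$ is exact for every $Q\in\Pj(\A)$, and hence $\Ext^1_{\A}(X',Q)=0$. For $j\geq1$, dimension shifting gives $\Ext^{j+1}_{\A}(X',Q)\simeq\Ext^j_{\A}(X,Q)=0$. This is the same computation as in the proof of Proposition~\ref{prop:gp}. Therefore $X'\in\orth{\A}$.

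Now suppose that every object of $\orth{\A}$ is $(n-1)$-torsionfree. Then $X'$ is $(n-1)$-torsionfree, and Proposition~\ref{prop:recurrence} shows that $X$ is $n$-torsionfree. By induction, $\orth{\A}\subseteq\T^\infty(\A)$. Combined with Proposition~\ref{prop:gp}, this gives $\orth{\A}=\GP(\A)$; the inclusion $\GP(\A)\subseteq\orth{\A}$ is clear from the same proposition.

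The main point to check is that the cokernel of a monomorphic left approximation of an object of $\orth{\A}$ stays in $\orth{\A}$. Only the vanishing of $\Ext^1$ needs the approximation property; the higher Ext groups follow by dimension shifting through the projective $P$. Once this closure property holds, the induction runs without further obstruction.
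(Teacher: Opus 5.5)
Your proof is correct and follows essentially the same route as the paper. In both, the key step is that the cokernel of a monomorphic left projective approximation of an object of $\orth{\A}$ again lies in $\orth{\A}$, so that (4) can be iterated. Your explicit induction via Proposition~\ref{prop:recurrence} simply makes precise the paper's ``repeating the construction.''
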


\begin{proof}
Proposition~\ref{prop:gp} gives $(1)\Leftrightarrow(2)$, and
$(2)\Rightarrow(3)\Rightarrow(4)$ is clear.
Assume (4) and let $M\in\orth{\A}$. Choose an approximation
sequence $0\to M\to P\to N\to0$.
Its approximation property gives $\Ext^1_{\A}(N,Q)=0$ for every
projective $Q$. Dimension shifting gives
\[
 \Ext^{j+1}_{\A}(N,Q)\simeq\Ext^j_{\A}(M,Q)=0\qquad(j\geq1).
\]
Thus $N\in\orth{\A}$, and (4) applies again. Repeating the
construction proves that $M$ is infinitely torsionfree.
\end{proof}

Let $M$ be an object in $\mathcal{A}$. There is an epimorphism $\begin{tikzcd}[column sep=0.5cm]
P \arrow[r, "f"] & M \arrow[r] & 0
\end{tikzcd}$ with $P$ projective. The object $\Omega^1 M$ is the kernel of $f$, and $\Omega^tM$ with $t\geq 0$ are called the \emph {syzygy} objects of $M$. Higher syzygies are defined by iteration.

\begin{lemma}\label{lem:syzygy}
Let $M\in\orth{\A}$. The following conditions are equivalent:

$(1)$ $M$ is Gorenstein projective;

$(2)$ $\Omega^t M$ is infinitely torsionfree
for some $t\geq1$;

$(3)$ every syzygy of $M$ is infinitely torsionfree.
\end{lemma}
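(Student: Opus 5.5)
We prove the cycle $(1)\Rightarrow(3)\Rightarrow(2)\Rightarrow(1)$, using Proposition~\ref{prop:gp} to translate Gorenstein projectivity into membership in $\orth{\A}\cap\T^\infty(\A)$.

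For $(1)\Rightarrow(3)$, let $M$ be Gorenstein projective and fix a totally acyclic complex with $M$ as its zeroth cocycle. Take any syzygy $\Omega^tM$, defined using an arbitrary projective resolution. The corresponding cocycle $Z$ in the left half of the totally acyclic complex is also a $t$-th syzygy of $M$, and it is Gorenstein projective, hence infinitely torsionfree. By the Schanuel lemma for projective resolutions, $\Omega^tM\oplus P\simeq Z\oplus P'$ for some projectives $P,P'$. Every projective lies in $\T^\infty(\A)$, and $\T^\infty(\A)$ is closed under finite direct sums and summands by Proposition~\ref{prop:summands}. Hence $\Omega^tM\in\T^\infty(\A)$.

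The implication $(3)\Rightarrow(2)$ is trivial.

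For $(2)\Rightarrow(1)$, suppose $K=\Omega^tM$ is infinitely torsionfree, with $t\ge1$. First, $K$ lies in $\orth{\A}$: dimension shifting gives $\Ext^j_{\A}(K,Q)\simeq\Ext^{j+t}_{\A}(M,Q)=0$ for $j\ge1$. So Proposition~\ref{prop:gp} makes $K$ Gorenstein projective. We now go up one step at a time, showing that $\Omega^{s}M$ Gorenstein projective with $s\ge1$ implies $\Omega^{s-1}M$ Gorenstein projective.

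Consider $0\to \Omega^sM\xra{\iota} P\to \Omega^{s-1}M\to 0$. Since $\Ext^1_{\A}(\Omega^{s-1}M,Q)=0$ for all projective $Q$ (because $M\in\orth{\A}$ and dimension shifting, or directly when $s=1$), the map $\iota$ is a left projective approximation. Then Proposition~\ref{prop:recurrence} applies: $\Omega^sM$ infinitely torsionfree implies $\Omega^{s-1}M$ infinitely torsionfree. Also $\Omega^{s-1}M\in\orth{\A}$ by dimension shifting from $M\in\orth{\A}$. Proposition~\ref{prop:gp} then gives $\Omega^{s-1}M\in\GP(\A)$.

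Iterating down to $s=1$ yields $M\in\GP(\A)$. Strictly speaking, only infinite torsionfreeness needs to be propagated; membership in $\orth{\A}$ holds at every level automatically.

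The main point requiring care is the approximation property of $\iota$. It depends on $M\in\orth{\A}$, which is exactly why this hypothesis is needed. The remaining subtlety is independence of the choice of syzygy, which is handled by Schanuel's lemma together with Proposition~\ref{prop:summands}.
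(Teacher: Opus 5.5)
Your proof is correct, and your $(2)\Rightarrow(1)$ is exactly the paper's argument. Because $M\in\orth{\A}$, each sequence $0\to\Omega^{s}M\to P_{s-1}\to\Omega^{s-1}M\to0$ of the resolution is an approximation sequence. Proposition~\ref{prop:recurrence} then moves infinite torsionfreeness up to $M$, and Proposition~\ref{prop:gp} finishes.

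Where you differ is $(1)\Rightarrow(3)$. You pass through a totally acyclic complex and a generalized Schanuel lemma for projective resolutions to compare an arbitrary $\Omega^tM$ with a cocycle. That lemma is standard in abelian categories but is not proved in the paper. The paper instead uses that Proposition~\ref{prop:recurrence} is an equivalence. So the very same approximation sequences also carry $M\in\T^\infty(\A)$ down to $\Omega^iM\in\T^\infty(\A)$, for whatever resolution was chosen. This gives $M\in\T^\infty(\A)\iff\Omega^tM\in\T^\infty(\A)$ for some, equivalently every, $t\geq1$ in one stroke, and the choice-of-syzygy issue never arises.

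Your route has one small merit: it does not use $M\in\orth{\A}$ in that direction, so it shows that syzygies of any Gorenstein projective object are infinitely torsionfree. But it costs an extra lemma. The paper's chain of equivalences is shorter and treats both directions uniformly.
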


\begin{proof} Since $M\in\orth{\A}$, dimension shifting shows that every syzygy of $M$ also belongs to $\orth{\A}$. Indeed, for $i\geq0$, $j\geq1$, and $Q\in\Pj(\A)$, one has \[ \Ext^j_{\A}(\Omega^iM,Q) \simeq \Ext^{j+i}_{\A}(M,Q)=0. \] 

Consider the short exact sequences coming from a projective resolution of $M$, \[ 0\longrightarrow\Omega^{i+1}M\longrightarrow P_i \longrightarrow\Omega^iM\longrightarrow0, \qquad i\geq0. \]
 Since $\Omega^iM\in\orth{\A}$, we have \[ \Ext^1_{\A}(\Omega^iM,Q)=0 \qquad\text{for every }Q\in\Pj(\A). \] Then the monomorphism $\Omega^{i+1}M\to P_i$ is a left projective approximation. Thus each of the above short exact sequences is an approximation sequence. Proposition~\ref{prop:recurrence} now gives \[ \Omega^{i+1}M\in\T^\infty(\A) \quad\Longleftrightarrow\quad \Omega^iM\in\T^\infty(\A) \qquad(i\geq0). \]

 Consequently, \[ M\in\T^\infty(\A) \quad\Longleftrightarrow\quad \Omega^tM\in\T^\infty(\A) \] for some, equivalently for every, $t\geq1$. Finally, since $M\in\orth{\A}$, Proposition~\ref{prop:gp} gives \[ M\in\GP(\A) \quad\Longleftrightarrow\quad M\in\T^\infty(\A). \] Combining the last two equivalences proves the result. \end{proof}

\begin{theorem}\label{thm:stabilization}
If $\T^n(\A)=\T^{n+1}(\A)$ for some $n\geq1$, then
$\T^n(\A)=\T^\infty(\A)$ and $\A$ is weakly Gorenstein.
\end{theorem}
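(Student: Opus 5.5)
The plan has two parts. First I will show that the filtration stabilizes from level $n$ onward. Then I will use the syzygies of an object of $\orth{\A}$ to reach Gorenstein projectivity via Lemma~\ref{lem:syzygy}.

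\emph{Step 1: $\T^n(\A)=\T^\infty(\A)$.} I will prove by induction on $k\geq n$ that $\T^k(\A)=\T^{k+1}(\A)$; the case $k=n$ is the hypothesis. Assume $\T^k(\A)=\T^{k+1}(\A)$ and let $M\in\T^{k+1}(\A)$. Since $k+1\geq1$, there is an approximation sequence $0\to M\to P\to N\to0$. Proposition~\ref{prop:recurrence} gives $N\in\T^{k}(\A)$, hence $N\in\T^{k+1}(\A)$ by the inductive hypothesis. Applying Proposition~\ref{prop:recurrence} again to the same sequence gives $M\in\T^{k+2}(\A)$. So $\T^{k+1}(\A)=\T^{k+2}(\A)$, and intersecting over all levels yields $\T^n(\A)=\T^\infty(\A)$.

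\emph{Step 2: weak Gorensteinness.} Let $M\in\orth{\A}$ and take a projective resolution with syzygies $\Omega^iM$. As in the proof of Lemma~\ref{lem:syzygy}, every $\Omega^iM$ lies in $\orth{\A}$, and each short exact sequence $0\to\Omega^{i+1}M\to P_i\to\Omega^iM\to0$ is an approximation sequence. Splicing the first $n$ of them gives an exact sequence
\[
 0\longrightarrow\Omega^nM\longrightarrow P_{n-1}\longrightarrow\cdots\longrightarrow P_0\longrightarrow M\longrightarrow0
\]
with $n$ projective middle terms. Its end term $M$ satisfies $\Ext^j_{\A}(M,Q)=0$ for all $j\geq1$ and all projective $Q$. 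By Lemma~\ref{lem:ext-characterization}, $\Omega^nM\in\T^n(\A)$, and Step~1 gives $\Omega^nM\in\T^\infty(\A)$. Lemma~\ref{lem:syzygy}, applied with $t=n\geq1$, then shows that $M$ is Gorenstein projective. Thus $\orth{\A}\subseteq\GP(\A)$. The reverse inclusion follows from Proposition~\ref{prop:gp}, so $\A$ is weakly Gorenstein.

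The only delicate point is the indexing in Step~2. One must check that the spliced sequence has exactly $n$ projective terms with cokernel $M$, so that Lemma~\ref{lem:ext-characterization} applies at level $n$ rather than $n-1$. This is routine.
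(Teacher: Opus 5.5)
Your proof is correct and follows the paper's argument. Stabilization propagates upward by iterating approximation sequences: you phrase this as an induction via Proposition~\ref{prop:recurrence}, while the paper iterates the defining sequences directly. Then $\Omega^nM\in\T^n(\A)$ by Lemma~\ref{lem:ext-characterization}, and together with Lemma~\ref{lem:syzygy} this gives weak Gorensteinness, exactly as in the paper.
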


\begin{proof}
Let $M\in\T^n(\A)=\T^{n+1}(\A)$. A defining first step for
its $(n+1)$-torsionfreeness is an approximation sequence
$0\to M\to P\to N\to0$ with $N\in\T^n(\A)$.
The same argument applies to $N$. Iteration gives approximation
sequences of arbitrary length, so $M\in\T^\infty(\A)$.

Now let $M\in\orth{\A}$. The first $n$ steps of a projective
resolution give
\[
 0\longrightarrow\Omega^nM\longrightarrow P_n\longrightarrow\cdots
 \longrightarrow P_1\longrightarrow M\longrightarrow0.
\]
Lemma~\ref{lem:ext-characterization} gives $\Omega^nM\in\T^n(\A)$.
Thus $\Omega^nM$ is infinitely torsionfree, and
Lemma~\ref{lem:syzygy} implies that $M$ is Gorenstein projective.
\end{proof}

Recall that $\A$ is Krull--Schmidt if each object is a finite direct
sum of indecomposables with local endomorphism rings. For Artin algebras, the case $n=1$ of the following recovers the torsionless-finite
criterion in \cite[Section~3.6]{RZ}.

\begin{corollary}\label{cor:finite-type}
Suppose that $\A$ is Krull--Schmidt and that $\T^n(\A)$ has only
finitely many indecomposable isomorphism classes for some $n\geq1$.
Then $\A$ is weakly Gorenstein.
\end{corollary}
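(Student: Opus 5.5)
We aim to reduce the corollary to Theorem~\ref{thm:stabilization}: it suffices to find some $m\geq n$ with $\T^m(\A)=\T^{m+1}(\A)$. Let $\mathcal{I}_k$ be the set of isomorphism classes of indecomposable objects lying in $\T^k(\A)$. Since $\A$ is Krull--Schmidt and $\T^k(\A)$ is closed under finite direct sums and direct summands (Proposition~\ref{prop:summands}), we have
\[
 \T^k(\A)=\add\{X\mid [X]\in\mathcal I_k\}.
\]
Indeed, an object lies in $\T^k(\A)$ exactly when all of its indecomposable summands do. Hence each $\T^k(\A)$ is determined by $\mathcal I_k$.

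The filtration in Remark~\ref{rem:classical}(1) gives a decreasing chain
\[
 \mathcal I_n\supseteq\mathcal I_{n+1}\supseteq\mathcal I_{n+2}\supseteq\cdots.
\]
By hypothesis $\mathcal I_n$ is finite. A decreasing chain of subsets of a finite set must stabilize, so there is $m\geq n\geq1$ with $\mathcal I_m=\mathcal I_{m+1}$. By the displayed description, this gives $\T^m(\A)=\T^{m+1}(\A)$.

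Theorem~\ref{thm:stabilization} applied at level $m$ then shows that $\A$ is weakly Gorenstein. Nothing here is a real obstacle. The only point needing care is the identification of $\T^k(\A)$ with the additive closure of its indecomposables. This uses existence of Krull--Schmidt decompositions together with closure under summands and finite sums; uniqueness of decompositions is not even required.
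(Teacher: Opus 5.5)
Your proposal is correct and follows the paper's own proof: the indecomposable classes in $\T^m(\A)$, $m\geq n$, form a descending chain of subsets of a finite set and so stabilize. Proposition~\ref{prop:summands} with Krull--Schmidt decompositions then gives $\T^m(\A)=\T^{m+1}(\A)$, and Theorem~\ref{thm:stabilization} finishes the argument. You are also right that only the existence of decompositions is needed.
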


\begin{proof}
For $m\geq n$, the indecomposable isomorphism classes in $\T^m(\A)$
form a descending sequence of subsets of a finite set, so they
eventually stabilize. By Proposition~\ref{prop:summands} and the
Krull--Schmidt property, $\T^m(\A)=\T^{m+1}(\A)$ for some $m$.
Apply Theorem~\ref{thm:stabilization}.
\end{proof}

Recall that a subcategory $\mathcal{X}$ of $\mathcal{A}$ is called \emph{extension-closed}, if the middle term $X$ of any short exact sequence $0\rightarrow X'\rightarrow X\rightarrow X''\rightarrow 0$ is in $\mathcal{X}$, provided the end terms $X', X''$ are in $\mathcal{X}$.

\begin{proposition}\label{prop:category-transfer}
Let $F:\A\rightleftarrows\B:G$ be an adjoint pair of exact functors,
with $F$ left adjoint to $G$. Assume that $G(\Pj(\B))\subseteq\Pj(\A)$, $F$ is faithful, and $\Pj(\A)=\add G(\Pj(\B))$.
\begin{enumerate}
\item If $\B$ is weakly Gorenstein, then so is $\A$.
\item If $\T^n(\B)$ is extension closed, then so is $\T^n(\A)$.
\item If $\T^n(\B)=\T^{n+1}(\B)$, then
      $\T^n(\A)=\T^{n+1}(\A)$.
\end{enumerate}
In particular, these assertions hold for faithful Frobenius functors.
If both functors of a Frobenius pair are faithful, each property is
equivalent for the two categories.
\end{proposition}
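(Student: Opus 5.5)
All three parts will be reduced to the reflection statement of Theorem~\ref{thm:adjoint-transfer} and to Lemma~\ref{lem:adjunction}. Under the present hypotheses these give, for $X\in\A$ and every $n$,
\[
 X\in\T^n(\A)\iff F(X)\in\T^n(\B),\qquad
 X\in\orth{\A}\iff F(X)\in\orth{\B}.
\]
The second equivalence comes from \eqref{eq:ext-adjunction} together with $\Pj(\A)=\add G(\Pj(\B))$. We will also use that $F$ is exact, so it sends short exact sequences to short exact sequences.

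For (1), let $X\in\orth{\A}$. Then $F(X)\in\orth{\B}$, and since $\B$ is weakly Gorenstein, $F(X)\in\GP(\B)$. Corollary~\ref{adjoint tranfer GP}, which is the faithful reflection case, gives $X\in\GP(\A)$. Hence $\orth{\A}\subseteq\GP(\A)$. The reverse inclusion always holds by Proposition~\ref{prop:gp}.

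For (2), let $0\to X'\to X\to X''\to0$ be exact in $\A$ with $X',X''\in\T^n(\A)$. Applying $F$ gives an exact sequence in $\B$ whose end terms lie in $\T^n(\B)$ by preservation. Extension closure in $\B$ puts $F(X)$ in $\T^n(\B)$, and reflection gives $X\in\T^n(\A)$.

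For (3), the inclusion $\T^{n+1}(\A)\subseteq\T^n(\A)$ always holds. For the other direction, $X\in\T^n(\A)$ gives $F(X)\in\T^n(\B)=\T^{n+1}(\B)$, and reflection yields $X\in\T^{n+1}(\A)$. Alternatively, once (1) holds one could invoke Theorem~\ref{thm:stabilization}, but this direct argument is enough.

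For the final sentences, Lemma~\ref{lem:frobenius} shows that a faithful Frobenius functor $F$ satisfies all the hypotheses: it is exact, its partner $G$ is exact and preserves projectives, and $\Pj(\A)=\add G(\Pj(\B))$. So (1)--(3) apply. If $G$ is also faithful, apply the same result to $G$, which is left adjoint to $\beta F\alpha$. Here $\beta F\alpha$ is again a Frobenius functor, and up to autoequivalences it has the same projective-preservation properties. The autoequivalences preserve projectives, Ext-orthogonality and all torsionfree classes, so they do not affect the hypotheses. This gives the transfer from $\A$ to $\B$, and hence each property holds for $\A$ if and only if it holds for $\B$. The only point needing care is this last bookkeeping: checking that $\Pj(\B)=\add(\beta F\alpha)(\Pj(\A))$. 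It follows from Lemma~\ref{lem:frobenius} applied to the faithful functor $G$, composed with $\beta$ and $\alpha$.
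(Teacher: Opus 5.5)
Your proof is correct and is essentially the paper's argument: (1)--(3) follow from preservation and reflection (Theorem~\ref{thm:adjoint-transfer}, Lemma~\ref{lem:adjunction}, Corollary~\ref{adjoint tranfer GP}), and the Frobenius case follows from Lemma~\ref{lem:frobenius} applied in both directions. Your treatment of $G\dashv\beta F\alpha$ is more explicit than the paper's. You should drop the aside in (3) about Theorem~\ref{thm:stabilization}: that theorem goes from $\T^n(\A)=\T^{n+1}(\A)$ to weak Gorensteinness, not the other way, so (1) plus that theorem gives no alternative route to (3).
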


\begin{proof} By Theorem~\ref{thm:adjoint-transfer} and Corollary~\ref{adjoint tranfer GP}, $F$ preserves and reflects $n$-torsionfreeness for every $n\geq1$, as well as infinite torsionfreeness and Gorenstein projectivity.

 For (1), let $M\in\orth{\A}$. By Lemma~\ref{lem:adjunction}, we have $ F(M)\in\orth{\B}$.  Since $\B$ is weakly Gorenstein,  $\orth{\B}=\GP(\B)$, and hence $F(M)\in\GP(\B)$. As $F$ reflects Gorenstein projectivity, Corollary~\ref{adjoint tranfer GP} gives $M\in\GP(\A)$. Thus $\orth{\A}\subseteq\GP(\A)$.  The reverse inclusion always holds by Proposition~\ref{prop:gp}. Therefore $\GP(\A)=\orth{\A}$, so $\A$ is weakly Gorenstein.

 For (2), suppose that $\T^n(\B)$ is extension closed, and consider a short exact sequence \[ 0\longrightarrow X\longrightarrow Y\longrightarrow Z\longrightarrow0 \] in $\A$ with $X,Z\in\T^n(\A)$. Since $F$ is exact, applying $F$ gives a short exact sequence \[ 0\longrightarrow F(X)\longrightarrow F(Y) \longrightarrow F(Z)\longrightarrow0 \] in $\B$. It follows from  Theorem~\ref{thm:adjoint-transfer} that $ F(X),F(Z)\in\T^n(\B)$. By the extension closure of $\T^n(\B)$, it follows that $F(Y)\in\T^n(\B)$. Since $F$ reflects $n$-torsionfreeness, we obtain $Y\in\T^n(\A)$. Hence $\T^n(\A)$ is extension closed. 

For (3), assume that $\T^n(\B)=\T^{n+1}(\B)$.  Since $\T^{n+1}(\A)\subseteq\T^n(\A)$ always holds, it remains to prove the reverse inclusion. Let $M\in\T^n(\A)$. By preservation, \[ F(M)\in\T^n(\B)=\T^{n+1}(\B). \] Reflection of $(n+1)$-torsionfreeness now gives $M\in\T^{n+1}(\A)$. Thus one has $\T^n(\A)=\T^{n+1}(\A)$. 

If $F$ is a faithful Frobenius functor, Lemma~\ref{lem:frobenius} shows that the hypotheses above are satisfied, so all three assertions apply. If both functors in a Frobenius pair are faithful, the same argument may be applied in both directions, and hence each of the three properties is equivalent for $\A$ and $\B$. \end{proof}

\section{Applications to Frobenius extensions}\label{sec:extensions}

In this section, we apply the preceding results to Frobenius extensions. We first study the transfer of torsionfree classes and then compare the corresponding terms in minimal injective resolutions over the base ring and the extension ring.

\subsection{Transfer of torsionfree classes}

A \emph {ring extension} $S/R$ is a ring homomorphism $R\to S$, which is unital.
A ring extension $S/R$ is \emph{Frobenius} if ${}_RS$ is finitely
generated projective and
\begin{equation}\label{eq:frobenius-ring}
 {}_SS_R\simeq\Hom_R({}_RS,{}_RR)
\end{equation}
as $(S,R)$-bimodules. Equivalently, induction $I=S\otimes_R-$ and
restriction $U={\rm Hom}_S({_SS_R,-})$ form a classical Frobenius pair.
Symmetrically, $S_R$ is finitely generated projective and
${}_RS_S\simeq\Hom_{R^{\op}}(S_R,R_R)$; see \cite{CR,Kadison}.
For two-sided noetherian rings, these functors restrict to finitely
generated module categories.

\begin{lemma}\label{lem:generator}
For a Frobenius extension $S/R$, the following conditions are
equivalent:  

$(1)$ $S_R$ is a generator;

$(2)$ ${}_RS$ is a generator;

$(3)$ $S\otimes_R-$ is faithful. 

If these equivalent conditions hold, then
 $R_R\in\add S_R$ and $ {}_RR\in\add{}_RS$.
\end{lemma}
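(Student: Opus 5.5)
The strategy is to reduce everything to the two splitting facts $R_R\in\add S_R$ and ${}_RR\in\add{}_RS$, using the Frobenius duality ${}_SS_R\simeq\Hom_R({}_RS,{}_RR)$ and its right-handed analogue to pass between the left and right sides.

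\emph{(1)$\Rightarrow$(3).} Assume $S_R$ is a generator. Then $R_R$ is a direct summand of a finite direct sum $S_R^m$. Since $S_R$ is finitely generated projective, a finite sum suffices: some finite sum of maps $S_R\to R_R$ hits $1$. Tensoring over $R$ with a left module $M$ shows that $M\cong R\otimes_RM$ is a direct summand of $(S\otimes_RM)^m$. If a morphism $g$ satisfies $S\otimes_Rg=0$, then $g$ is a direct summand of $(S\otimes_Rg)^{\oplus m}$, so $g=0$. Hence $S\otimes_R-$ is faithful.

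\emph{(3)$\Rightarrow$(1).} Consider the trace ideal $\tau=\sum_{\phi\in\Hom_{R}(S_R,R_R)}\phi(S)$, a two-sided ideal of $R$; then $S_R$ is a generator if and only if $\tau=R$. Because $S_R$ is projective, it is a direct summand of $R^n$ and $S\tau=S$, so $S\otimes_R(R/\tau)\cong S/S\tau=0$. Faithfulness of $S\otimes_R-$ forces $\mathrm{Id}_{R/\tau}=0$, i.e.\ $R/\tau=0$, so $\tau=R$.

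The hard point is the equality $S\tau=S$. I would obtain it from the dual basis lemma: writing $s=\sum_i x_i\phi_i(s)$ with $x_i\in S$ and $\phi_i\in\Hom(S_R,R_R)$ shows $s\in S\tau$.

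\emph{(2)$\Leftrightarrow$(3).} The symmetric argument with the trace ideal $\tau'$ of ${}_RS$ and the functor $-\otimes_RS$ on right modules gives that (2) is equivalent to faithfulness of $-\otimes_RS$. To tie this back to (3), I would use the Frobenius isomorphisms to identify the two trace ideals. By ${}_RS_S\simeq\Hom_{R^{\op}}(S_R,R_R)$, every $\phi\in\Hom(S_R,R_R)$ has the form $\phi(s)=E(ts)$ for a fixed Frobenius homomorphism $E:S\to R$ (an $R$-bimodule map) and some $t\in S$. Hence $\tau=E(S)$. Symmetrically, from ${}_SS_R\simeq\Hom_R({}_RS,{}_RR)$, every left $R$-linear map ${}_RS\to{}_RR$ has the form $s\mapsto E(st)$, so $\tau'=E(S)$ as well. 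The main obstacle I expect is checking that the same $E$ implements both dualities; this is standard for Frobenius extensions (see \cite{Kadison}). Once it is established, $\tau=\tau'$, and so (1)$\Leftrightarrow$(2).

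Finally, under these equivalent conditions the generator property gives split epimorphisms $S_R^m\to R_R$ and ${}_RS^m\to{}_RR$, since the targets are projective. Thus $R_R\in\add S_R$ and ${}_RR\in\add{}_RS$.
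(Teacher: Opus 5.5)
Your proof is correct, but it is organized differently from the paper's.

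\emph{The paper's route.} It proves (1)$\Leftrightarrow$(2) by applying the duality $\Hom_{R^{\op}}(-,R_R)$ on finitely generated projectives to the relation $R_R\in\add S_R$, together with ${}_RS\simeq\Hom_{R^{\op}}(S_R,R_R)$. It proves (2)$\Leftrightarrow$(3) from the natural isomorphism $S\otimes_RN\simeq\Hom_R(\Hom_{R^{\op}}(S_R,R_R),N)\simeq\Hom_R({}_RS,N)$. Faithfulness of induction thus becomes faithfulness of $\Hom_R({}_RS,-)$, i.e.\ ${}_RS$ being a generator.

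\emph{Your route.} You pair (1) with (3) directly: tensor a splitting of $R_R$ off $S_R^m$ in one direction, and use the trace-ideal argument $S\otimes_R(R/\tau)\simeq S/S\tau=0$ in the other. This part uses only that $S_R$ is projective. So it shows that the Frobenius hypothesis enters only in the left--right comparison (1)$\Leftrightarrow$(2). There you also obtain something sharper: the trace ideals of $S_R$ and ${}_RS$ coincide (both equal $E(S)$) whether or not they are generators.

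\emph{The deferred point.} The price is the claim you cite from \cite{Kadison}, that one Frobenius homomorphism $E$ implements both dualities. It is true and quick to check. Take $E$ to be the image of $1$ under ${}_SS_R\simeq\Hom_R({}_RS,{}_RR)$, and a dual basis $s=\sum_iE(sy_i)x_i$. Nondegeneracy then gives $s=\sum_iy_iE(x_is)$, so any right $R$-linear $\phi$ equals $E(t\,-)$ with $t=\sum_i\phi(y_i)x_i$. You can also bypass it. For a finitely generated projective $P_R$, every left $R$-linear map $\Hom_{R^{\op}}(P_R,R_R)\to R$ is evaluation at some $p\in P$, so $P_R$ and its dual have the same trace ideal. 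Then ${}_RS\simeq\Hom_{R^{\op}}(S_R,R_R)$ gives $\tau=\tau'$ directly.

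\emph{A small slip.} In (1)$\Rightarrow$(3), a finite sum suffices because $R_R$ is cyclic, not because $S_R$ is finitely generated.
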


\begin{proof}
A finitely generated projective module is a generator if and only if
its additive closure contains the regular module. Duality of finite
projectives and the Frobenius isomorphisms give
\[
 R_R\in\add S_R
 \quad\Longleftrightarrow\quad
 {}_RR\in\add\Hom_{R^{\op}}(S_R,R_R)=\add{}_RS.
\]
Moreover, the natural isomorphism
\[
 S\otimes_R N\simeq
 \Hom_R\big(\Hom_{R^{\op}}(S_R,R_R),N\big)
\]
shows that induction is faithful exactly when ${}_RS$ is a generator.
\end{proof}

\begin{corollary}\label{cor:ring-transfer}
Let $S/R$ be Frobenius and $n\geq1$.
\begin{enumerate}
\item For every $M\in S\Mod$,
\[
 M\in\T^n(S\Mod)\quad\Longleftrightarrow\quad
 {}_RM\in\T^n(R\Mod).
\]
\item Induction preserves $n$-torsionfreeness. If $S_R$ is a
generator, then, for every $N\in R\Mod$,
\[
 N\in\T^n(R\Mod)\quad\Longleftrightarrow\quad
 S\otimes_RN\in\T^n(S\Mod).
\]
\end{enumerate}
Both assertions hold for infinite torsionfreeness and Gorenstein
projectivity. Over two-sided noetherian rings, they also hold for
finitely generated modules, with classical torsionfreeness.
\end{corollary}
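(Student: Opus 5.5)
The plan is to apply Corollary~\ref{cor:frobenius-transfer} to the classical Frobenius pair $(I,U)$ with $I=S\otimes_R-$ and $U$ restriction. Since $(I,U)$ and $(U,I)$ are both adjoint pairs, each functor is a Frobenius functor, so each preserves finite and infinite torsionfreeness and Gorenstein projectivity. This already gives the forward implications in (1) and (2), including the unconditional statement that induction preserves $n$-torsionfreeness.

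For reflection we need faithfulness. Restriction $U$ is always faithful: it does not change the underlying abelian group homomorphisms, so a nonzero $S$-linear map stays nonzero. Hence Corollary~\ref{cor:frobenius-transfer}, applied with $U$ as the first member of the pair $(U,I)$, gives the equivalence in (1) for $n$-torsionfreeness, infinite torsionfreeness and Gorenstein projectivity. For (2), assume $S_R$ is a generator. By Lemma~\ref{lem:generator}, $S\otimes_R-$ is then faithful, and Corollary~\ref{cor:frobenius-transfer} yields the reflection for induction. The hypothesis $\Pj(R\Mod)=\add U(\Pj(S\Mod))$ needed in Theorem~\ref{thm:adjoint-transfer} is supplied automatically by Lemma~\ref{lem:frobenius}. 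If one wants it concretely, it follows from ${}_RR\in\add{}_RS$.

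For the finitely generated case over two-sided noetherian rings, note that ${}_RS$ and $S_R$ are finitely generated projective. So $I$ and $U$ restrict to exact functors between $R\modu$ and $S\modu$, and the adjunctions restrict since these are full subcategories. Both categories have enough projectives, so the whole argument runs unchanged inside $R\modu$ and $S\modu$. Remark~\ref{rem:classical}(2) then identifies the categorical notion with classical torsionfreeness.

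The main point needing care is this restriction step, in particular that Lemma~\ref{lem:frobenius} still applies to the restricted pair. This holds because the adjunction isomorphisms and the faithfulness of each functor are inherited by full subcategories. Everything else is a direct citation.
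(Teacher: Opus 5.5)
Your proposal is correct and follows the paper's own argument. Restriction is faithful, so Corollary~\ref{cor:frobenius-transfer} gives (1); under the generator hypothesis, Lemma~\ref{lem:generator} makes induction faithful, which gives (2). Your extra remarks on restricting to finitely generated modules spell out what the paper records in one line when it introduces Frobenius extensions.
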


\begin{proof}
The restriction  $U={\rm Hom}_S({_SS_R,-})$ is faithful. Apply Corollary~\ref{cor:frobenius-transfer}
and Lemma~\ref{lem:generator}.
\end{proof}

For finitely generated modules, (1) is \cite[Theorem~3.5]{Zhao24}.
Its Gorenstein projective counterpart appears in \cite{CR,Ren,Zhao19}.

\begin{corollary}\label{cor:weak-ring}
Let $S/R$ be a Frobenius extension of Artin algebras.
Left weak Gorensteinness, extension closure of $\T^n$, and the
equality $\T^n=\T^{n+1}$ in finitely generated module categories pass from $R$
to $S$. If $S_R$ is a generator, each property is equivalent for
$R$ and $S$. The right-handed assertions also hold.
\end{corollary}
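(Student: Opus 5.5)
The plan is to apply Proposition~\ref{prop:category-transfer} to the classical Frobenius pair formed by induction $I=S\otimes_R-$ and restriction $U$, working with the finitely generated module categories $R\modu$ and $S\modu$. Artin algebras are two-sided noetherian. Since ${}_RS$ and $S_R$ are finitely generated, both functors restrict to these categories. Both restricted categories have enough projectives. By Remark~\ref{rem:classical}(2), $\T^n$ of these categories is classical torsionfreeness, so the statement is about the intended classes.

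\emph{From $R$ to $S$.} Take the pair $(F,G)=(U,I)$. Here $U:S\modu\to R\modu$ is left adjoint to $I$, since the pair is classical Frobenius, and $U$ is faithful. By Lemma~\ref{lem:frobenius}, $U$ is a faithful Frobenius functor, so $\Pj(S\modu)=\add I(\Pj(R\modu))$ and all hypotheses of Proposition~\ref{prop:category-transfer} hold with $\A=S\modu$ and $\B=R\modu$. Parts (1)--(3) of that proposition then transfer left weak Gorensteinness, extension closure of $\T^n$, and the equality $\T^n=\T^{n+1}$ from $R\modu$ to $S\modu$.

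\emph{The converse under the generator hypothesis.} If $S_R$ is a generator, Lemma~\ref{lem:generator} shows that $I$ is faithful. I would need to check that this still holds on $R\modu$; faithfulness on a full subcategory is automatic. Now both functors of the Frobenius pair are faithful. The final sentence of Proposition~\ref{prop:category-transfer} then gives the equivalence of each property for $R$ and $S$. Concretely, apply the proposition again with $(F,G)=(I,U)$, which is legitimate because $(I,U)$ is also an adjoint pair.

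\emph{Right-handed assertions.} $S^{\op}/R^{\op}$ is again a Frobenius extension of Artin algebras, by the symmetric description $S_R$ finitely generated projective and ${}_RS_S\simeq\Hom_{R^{\op}}(S_R,R_R)$. The generator condition for this extension is ${}_RS$ being a generator, which is equivalent to $S_R$ being a generator by Lemma~\ref{lem:generator}. So the left-handed argument applies verbatim to the opposite algebras.

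The only mildly delicate step is making sure that the restriction of $I$ and $U$ to finitely generated modules keeps every hypothesis, in particular $\Pj=\add G(\Pj)$ inside $R\modu$ rather than $R\Mod$. I would handle this by rerunning the proof of Lemma~\ref{lem:frobenius} inside the finitely generated categories, where projective covers by finitely generated projectives exist.
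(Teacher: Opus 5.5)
Your proposal is correct and follows the paper's proof. You apply Proposition~\ref{prop:category-transfer} to the faithful restriction $U$, viewed as left adjoint of $I$, for the ascent from $R$ to $S$. Under the generator hypothesis you apply it again to $I$, which is faithful by Lemma~\ref{lem:generator}, and you pass to $S^{\op}/R^{\op}$ for the right-handed version.

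Your extra worry about finitely generated modules is unnecessary but harmless. Lemma~\ref{lem:frobenius} applies verbatim to the restricted classical Frobenius pair between $R\modu$ and $S\modu$, since it only uses enough projectives and not projective covers.
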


\begin{proof} Since the restriction functor $U={\rm Hom}_S({_SS_R,-})$ is faithful, the first assertion follows from Proposition~\ref{prop:category-transfer}. Under the generator hypothesis, the induction functor $I=S\otimes_R-$ is also faithful, and each property is reflective by Proposition~\ref{prop:category-transfer} again.
Using Lemma~\ref{lem:generator} for the opposite rings, the right-handed assertions hold.
\end{proof}

\subsection{Minimal injective resolutions}
For a left $R$-module $M$, let $E_R^i(M)$ be the $i$th term in its
minimal injective resolution, with $E_R^0(M)$ the injective envelope.
Dimensions may be infinite, and we set $\fd_R0=0$.

\begin{lemma}\label{lem:minimal-injectives}
Over any ring, $E_R^i(M)$ is a direct summand of the $i$th term in
every injective resolution of $M$. Minimal injective resolutions
commute with finite direct sums. Consequently,
\[
 M\in\add N\quad\Longrightarrow\quad E_R^i(M)\in\add E_R^i(N).
\]
\end{lemma}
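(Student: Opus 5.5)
We prove the three assertions in order. The first is the standard comparison between a minimal injective resolution and an arbitrary one, and the other two follow from it formally.

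\emph{First assertion.} Let $0\to M\to I^0\to I^1\to\cdots$ be any injective resolution, and let $0\to M\to E^0\to E^1\to\cdots$ be the minimal one, with $E^i=E_R^i(M)$. We argue by induction on $i$.

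For $i=0$, the inclusion $M\to E^0$ is an injective envelope, so $M$ is essential in $E^0$. Extend $M\to I^0$ along $M\to E^0$ to a map $\varphi\colon E^0\to I^0$. Its kernel meets $M$ trivially, so $\varphi$ is monic by essentiality. Since $E^0$ is injective, $\varphi$ splits, giving $I^0\simeq E^0\oplus J^0$ with the inclusion of $M$ landing in $E^0$.

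Passing to cokernels, we get $\Coker(M\to I^0)\simeq C^0\oplus J^0$, where $C^0=\Coker(M\to E^0)$. The resolution then continues with an injective resolution of $C^0\oplus J^0$. Since $J^0$ is injective, a minimal injective resolution of $C^0\oplus J^0$ is the minimal resolution of $C^0$ with $J^0$ added in degree $0$. So the induction hypothesis, applied to the tail and using the next point, gives $E^{i}(M)=E^{i-1}(C^0)$ as a summand of $I^i$.

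The step I expect to need the most care is the bookkeeping that the tail of an arbitrary resolution is again an injective resolution of a module of the form $C\oplus(\text{injective})$. A cleaner alternative is the uniqueness of minimal resolutions up to isomorphism: every injective resolution is the direct sum of the minimal one and a split exact complex of injectives. This is proved by the same splitting argument, applied at each stage to the cokernels.

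\emph{Direct sums.} The direct sum of minimal injective resolutions of $M$ and $N$ is again minimal. At each stage the relevant inclusion is $\Coker\hookrightarrow E$, and a direct sum of essential extensions is essential (finite sums suffice). Hence $E^i(M\oplus N)\simeq E^i(M)\oplus E^i(N)$, by uniqueness of minimal resolutions.

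\emph{Consequence.} If $M\in\add N$, write $M\oplus M'\simeq N^{k}$. The direct sum part gives
\[
E^i(M)\oplus E^i(M')\simeq E^i(N)^{k},
\]
so $E^i(M)\in\add E^i(N)$.
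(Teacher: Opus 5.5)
Your proof is correct and follows the paper's argument. It uses the same essentiality-plus-splitting step in degree $0$, iterated on cokernels, and the same fact that finite direct sums of essential extensions are essential for the last two assertions. The only difference is bookkeeping: the paper discards the split piece $K\xra{\sim}d^0(K)$ to get a resolution of $E_R^0(M)/M$ inside $I^1$. You instead keep $J^0$, resolve $C^0\oplus J^0$, and absorb $J^0$ into the zeroth minimal term via the direct-sum compatibility. You prove that compatibility independently, so the argument is not circular.
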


\begin{proof}
Let $0\to M\to I^0\xra{d^0}I^1\to\cdots$ be an injective
resolution. Essentiality of $M\subseteq E_R^0(M)$ shows that an
extension $E_R^0(M)\to I^0$ of $M\to I^0$ is monic and splits.
Write $I^0=E_R^0(M)\oplus K$, with $M$ in the first summand.
Then $d^0|_K$ is monic, and its image is injective. Removing the
split complex $K\xra{\sim}d^0(K)$ leaves an injective resolution
of $E_R^0(M)/M$. Repeating the argument proves the first assertion.

Finite direct sums of essential embeddings are essential, so
injective envelopes commute with finite direct sums. Applying this
successively to cokernels gives the same assertion in every degree.
The last statement follows by taking direct summands.
\end{proof}

\begin{lemma}\label{lem:flat-transfer}
For a Frobenius extension $S/R$, the induction functor $I=S\otimes_R-$ and the restriction $U={\rm Hom}_S({_SS_R,-})$
are exact and preserve injectives, projectives and flat modules.
In particular,
\[
 \fd_S(S\otimes_RN)\leq\fd_RN,\qquad
 \fd_R({}_RM)\leq\fd_SM.
\]
\end{lemma}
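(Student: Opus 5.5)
Exactness of $I$ and $U$ comes from Lemma~\ref{lem:frobenius}: induction and restriction form a classical Frobenius pair, so each functor is both a left and a right adjoint. The same lemma says they preserve projectives and injectives. It can also be checked by hand. Since ${}_RS$ and $S_R$ are finitely generated projective, the functors $S\otimes_R-$ and restriction are exact. The isomorphism $S\otimes_R-\simeq\Hom_R(\Hom_{R^{\op}}(S_R,R_R),-)$, which also appears in the proof of Lemma~\ref{lem:generator}, identifies induction with coinduction. Coinduction is right adjoint to the exact restriction functor, so it preserves injectives; induction is left adjoint to the exact restriction functor, so it preserves projectives. Restriction has the exact right adjoint $\Hom_S(S,-)$ and the exact left adjoint $S\otimes_R-$, so it preserves both injectives and projectives.

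For flat modules I will use tensor identities rather than adjunction.
\begin{enumerate}
\item For a flat left $R$-module $N$ and a right $S$-module $X$, one has $X\otimes_S(S\otimes_RN)\simeq X_R\otimes_RN$. Restriction of right modules is exact and $N$ is flat, so this functor of $X$ is exact and $S\otimes_RN$ is flat over $S$.
\item For a flat left $S$-module $M$ and a right $R$-module $Y$, one has $Y\otimes_R{}_RM\simeq(Y\otimes_RS)\otimes_SM$. The functor $Y\mapsto Y\otimes_RS$ is exact because ${}_RS$ is projective, hence flat, and $M$ is flat over $S$. So ${}_RM$ is flat over $R$.
\end{enumerate}
Alternatively, flat modules are direct limits of finitely generated projectives by Lazard's theorem. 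Both functors commute with direct limits, since restriction and tensor products do, and both preserve finitely generated projectives. This gives the same conclusion.

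For the inequalities, take a flat resolution $0\to F_n\to\cdots\to F_0\to N\to0$ of length $n=\fd_RN$. Applying the exact functor $I$ gives a flat resolution of $S\otimes_RN$ of length $n$, so $\fd_S(S\otimes_RN)\le\fd_RN$. Restricting a flat $S$-resolution of $M$ gives $\fd_R({}_RM)\le\fd_SM$ in the same way. If the dimension is infinite there is nothing to prove, and the convention $\fd0=0$ is compatible with both inequalities. The only step that needs any care is flatness. The Frobenius hypothesis is not needed there, only the projectivity of ${}_RS$ and $S_R$, but one has to apply the tensor identity on the correct side.
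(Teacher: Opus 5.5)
Your proof is correct. Exactness and preservation of projectives and injectives (via Lemma~\ref{lem:frobenius}) and the flat-dimension inequalities (apply the exact functors to a flat resolution) are handled as in the paper.

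The real difference is the flatness step. The paper uses Lazard's theorem: it writes a flat module as a filtered colimit of finitely generated free modules. Both functors commute with filtered colimits and send finitely generated free modules to finitely generated projective ones. A filtered colimit of flat modules is flat. This is exactly your ``alternatively'' remark.

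Your main argument uses the associativity isomorphisms $X\otimes_S(S\otimes_RN)\simeq X_R\otimes_RN$ and $Y\otimes_RM\simeq(Y\otimes_RS)\otimes_SM$. This is more elementary, with no Lazard and no colimit argument, and it shows the minimal hypotheses. Induction preserves flatness for an arbitrary ring homomorphism. Restriction does so as soon as ${}_RS$ is flat. So neither the Frobenius isomorphism nor finite generation matters in this step. The Lazard route is a reusable template: any functor that commutes with filtered colimits and sends finitely generated free modules to flat ones preserves flatness. Here, though, it is the heavier tool.

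One small slip in your by-hand check: the right adjoint of restriction is coinduction $\Hom_R({}_RS_S,-)$, which is exact because ${}_RS$ is projective. In the paper's notation, $\Hom_S(S,-)$ is restriction itself. This does not affect the argument, since your appeal to Lemma~\ref{lem:frobenius} already covers that step.
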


\begin{proof} Since $S/R$ is a Frobenius extension, both ${}_RS$ and $S_R$ are finitely generated projective. Moreover, the induction functor $I=S\otimes_R-$ and the restriction $U={\rm Hom}_S({_SS_R,-})$ form a classical Frobenius pair. Hence both functors are exact and preserve projective and injective modules; see Lemma~3.4.
We only need to verify the assertion concerning flat modules. 

Let $N$ be a flat left $R$-module. By Lazard's theorem, $N$ can be written as a filtered colimit $ N\cong \varinjlim_{\lambda} F_\lambda$,  where each $F_\lambda$ is a finitely generated free left $R$-module. Since tensor products commute with filtered colimits, we have \[ S\otimes_R N \cong S\otimes_R\Bigl(\varinjlim_{\lambda}F_\lambda\Bigr) \cong \varinjlim_{\lambda}(S\otimes_R F_\lambda). \] 

If $F_\lambda\cong R^{n_\lambda}$, then $S\otimes_R F_\lambda \cong S^{n_\lambda}$ as left $S$-modules. Thus every $S\otimes_R F_\lambda$ is a finitely generated free, and hence flat, left $S$-module. It follows again from Lazard's theorem that $S\otimes_R N$ is flat over $S$. Therefore the induction functor $I=S\otimes_R-$ preserves flat modules. Now let $M$ be a flat left $S$-module. Again by Lazard's theorem, $ M\cong\varinjlim_{\mu} G_\mu$,  where each $G_\mu$ is a finitely generated free left $S$-module. The restriction functor $U={\rm Hom}_S({_SS_R,-})$ commutes with filtered colimits, so ${}_RM\cong\varinjlim_{\mu}{}_RG_\mu$. 

 If $G_\mu\cong S^{m_\mu}$, then, after restriction of scalars, ${}_RG_\mu\cong({}_RS)^{m_\mu}$. Since ${}_RS$ is finitely generated projective over $R$, each ${}_RG_\mu$ is a finitely generated projective, hence flat, left $R$-module. Thus ${}_RM$ is flat over $R$. Hence restriction functor also preserves flat modules.

 Finally, let \[ \cdots\longrightarrow F_1\longrightarrow F_0 \longrightarrow N\longrightarrow 0 \] be a flat resolution of $N$ over $R$. Since induction functor is exact and preserves flat modules, applying $S\otimes_R-$ gives a flat $S$-resolution \[ \cdots\longrightarrow S\otimes_R F_1 \longrightarrow S\otimes_R F_0 \longrightarrow S\otimes_R N\longrightarrow 0. \] Consequently, $\operatorname{fd}_S(S\otimes_R N) \leq \operatorname{fd}_R N$. Similarly, if \[ \cdots\longrightarrow H_1\longrightarrow H_0 \longrightarrow M\longrightarrow 0 \] is a flat resolution of $M$ over $S$, then restricting scalars gives a flat $R$-resolution of ${}_RM$. Therefore  $\operatorname{fd}_R({}_RM) \leq \operatorname{fd}_S M$. This proves the lemma. 
\end{proof}

\begin{theorem}\label{thm:injective-comparison}
Let $S/R$ be a Frobenius extension. For all $i\geq0$, $N\in R\Mod$ and
$M\in S\Mod$, one has
\begin{align}
 \fd_S E_S^i(S\otimes_RN)&\leq\fd_R E_R^i(N),
       \label{eq:induced-injective}\\
 \fd_R E_R^i({}_RM)&\leq\fd_S E_S^i(M).
       \label{eq:restricted-injective}
\end{align}
If $S_R$ is a generator, then
\begin{equation}\label{eq:regular-comparison}
 \fd_SE_S^i(S)=\fd_RE_R^i(R)\quad(i\geq0),\qquad
 \id_SS=\id_RR.
\end{equation}
If $R\to S$ splits as an $R$-bimodule map, equality holds in
\eqref{eq:induced-injective} for every $N$ and $i$.
All assertions have right-handed versions.
\end{theorem}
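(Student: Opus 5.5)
Apply the exact functors $I=S\otimes_R-$ and $U$ to minimal injective resolutions, then use Lemma~\ref{lem:minimal-injectives} to compare minimal terms and Lemma~\ref{lem:flat-transfer} to compare flat dimensions.

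\emph{Step 1: inequality \eqref{eq:induced-injective}.} Let $0\to N\to E_R^0(N)\to E_R^1(N)\to\cdots$ be the minimal injective resolution of $N$. By Lemma~\ref{lem:flat-transfer}, $I$ is exact and preserves injectives, so applying it gives an injective resolution of $S\otimes_RN$ with $i$th term $S\otimes_RE_R^i(N)$. Lemma~\ref{lem:minimal-injectives} makes $E_S^i(S\otimes_RN)$ a direct summand of this term. Flat dimension does not increase on summands, so Lemma~\ref{lem:flat-transfer} gives
\[
\fd_SE_S^i(S\otimes_RN)\le\fd_S(S\otimes_RE_R^i(N))\le\fd_RE_R^i(N).
\]

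\emph{Step 2: inequality \eqref{eq:restricted-injective}.} The same argument works with $U$ applied to the minimal injective resolution of $M$: restriction is exact, preserves injectives, and does not raise flat dimension.

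\emph{Step 3: the regular modules.} Take $N=R$; since $S\otimes_RR\cong S$, Step 1 gives $\fd_SE^i_S(S)\le\fd_RE^i_R(R)$. For the reverse inequality:
\begin{itemize}
\item Take $M=S$ in Step 2 to get $\fd_RE^i_R({}_RS)\le\fd_SE^i_S(S)$.
\item Since $S_R$ is a generator, Lemma~\ref{lem:generator} gives ${}_RR\in\add{}_RS$.
\item Lemma~\ref{lem:minimal-injectives} then gives $E_R^i(R)\in\add E_R^i({}_RS)$.
\item Flat dimension of a summand of a finite sum is bounded by that of the pieces, so $\fd_RE_R^i(R)\le\fd_RE_R^i({}_RS)\le\fd_SE_S^i(S)$.
\end{itemize}
For $\id_SS=\id_RR$, note that $\id_RR=\sup\{i:E_R^i(R)\neq0\}$, and the same holds over $S$. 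Flat dimensions agree, and $\fd0=0$ is the convention, so equal flat dimensions alone do not detect vanishing; I compare vanishing directly instead.
\begin{itemize}
\item Step 1 exhibits $E^i_S(S)$ as a summand of $S\otimes_RE^i_R(R)$, so $E_R^i(R)=0$ implies $E_S^i(S)=0$.
\item Conversely, $E^i_R(R)$ lies in $\add E^i_R({}_RS)$, and $E^i_R({}_RS)$ is a summand of ${}_RE^i_S(S)$. So $E^i_S(S)=0$ implies $E^i_R(R)=0$.
\end{itemize}
Hence both resolutions vanish in exactly the same degrees, which gives $\id_SS=\id_RR$.

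\emph{Step 4: the split case.} Suppose $R\to S$ splits as $R$-bimodules. Then $N$ is an $R$-direct summand of ${}_R(S\otimes_RN)$, naturally in $N$. Hence $E_R^i(N)\in\add E_R^i({}_R(S\otimes_RN))$, which is a summand of ${}_RE_S^i(S\otimes_RN)$. Flat-dimension monotonicity and Step 2 give
\[
\fd_RE_R^i(N)\le\fd_SE_S^i(S\otimes_RN),
\]
the reverse of \eqref{eq:induced-injective}.

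The right-handed versions follow by applying everything to the Frobenius extension $S^{\op}/R^{\op}$, using the symmetric description of Frobenius extensions and Lemma~\ref{lem:generator}.

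The main obstacle I expect is the identity $\id_SS=\id_RR$. Equal flat dimensions do not see whether a term is zero, so this has to be handled by the separate vanishing comparison in Step 3.
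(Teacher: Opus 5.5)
Your proposal is correct and follows essentially the same route as the paper: you induce and restrict minimal injective resolutions, extract the minimal terms as summands via Lemma~\ref{lem:minimal-injectives}, bound flat dimensions via Lemma~\ref{lem:flat-transfer}, and use ${}_RR\in\add{}_RS$ (resp.\ the bimodule retraction) for the reverse inequalities. The only difference is cosmetic: for $\id_SS=\id_RR$ you compare vanishing of the minimal injective terms degree by degree, whereas the paper uses the chain $\id_SS\leq\id_RR\leq\id_R({}_RS)\leq\id_SS$; both rest on the same summand relations, and you are right that equal flat dimensions alone do not detect vanishing, which is why a separate argument is needed.
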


\begin{proof}
Applying the induction functor $S\otimes_R-$ to a minimal injective resolution of ${_RN}$ gives
an injective resolution of $_SS\otimes_RN$.
Lemmas~\ref{lem:minimal-injectives} and~\ref{lem:flat-transfer} give
\[
 \fd_SE_S^i(S\otimes_RN)
 \leq\fd_S(S\otimes_RE_R^i(N))\leq\fd_RE_R^i(N).
\]
Applying the restriction functor $U={\rm Hom}_S({_SS_R,-})$  to a minimal injective resolution of ${_SM}$ gives
an injective resolution of $_RM$. Then we obtain \eqref{eq:restricted-injective} in the same way.

If $S_R$ is a generator, Lemma~\ref{lem:generator} gives
${}_RR\in\add{}_RS$. Hence
\[
 \fd_RE_R^i(R)\leq\fd_RE_R^i({}_RS)
 \leq\fd_SE_S^i(S)\leq\fd_RE_R^i(R).
\]
The same summand relation also gives
\[
 \id_SS\leq\id_RR\leq\id_R({}_RS)\leq\id_SS.
\]
This proves \eqref{eq:regular-comparison}, including infinite values.

If $\rho:S\to R$ is an $R$-bimodule retraction, then
$N\to{}_R(S\otimes_RN)$, $n\mapsto1\otimes n$, splits via
$s\otimes n\mapsto\rho(s)n$. Thus
\[
 \fd_RE_R^i(N)\leq\fd_RE_R^i({}_R(S\otimes_RN))
 \leq\fd_SE_S^i(S\otimes_RN).
\]
Together with \eqref{eq:induced-injective}, this gives equality.
For right modules, pass to opposite rings and use
Lemma~\ref{lem:generator}.
\end{proof}

\begin{remark}\label{rem:splitting}
For a fixed $N$, equality in \eqref{eq:induced-injective} only requires
$N\in\add{}_R(S\otimes_RN)$. A bimodule splitting ensures this for
every $N$. A one-sided splitting of regular modules does not by itself
supply the required left $R$-linear splitting after tensoring.
\end{remark}

\begin{remark}\label{rem:generator-needed}
The generator hypothesis in \eqref{eq:regular-comparison} cannot be
omitted for general Frobenius homomorphisms. The projection
$R=k\times\mathbb Z\to S=k$ is Frobenius, but $S_R$ is not a
generator. Since $E_R^1(R)=(0,\mathbb Q/\mathbb Z)$, one has
\[
 \id_RR=1\ne0=\id_SS,\qquad
 \fd_RE_R^1(R)=1\ne0=\fd_SE_S^1(S).
\]
Here $\mathbb Q/\mathbb Z$ has flat dimension one over $\mathbb Z$.
\end{remark}

\subsection{Auslander-type conditions and AGC}
For a two-sided noetherian ring $R$, we use the convention that $R$
is \emph{left quasi-$n$-Gorenstein} if
$\fd_RE_R^i(R)\leq i+1$ for $0\leq i<n$.
By Theorem 4.7 in \cite{AR}, $R$ is left quasi-$n$-Gorenstein if and only if $\T^i(R^{\rm op}$-mod) is extension-closed for $1\leq i\leq n$. The right quasi-$n$-Gorenstein condition is defined over $R^{\op}$.

 A two-sided noetherian ring $R$ satisfies the \emph{Auslander condition} if
\[
 \fd_RE_R^i(R)\leq i\qquad(i\geq0).
\]
The  condition is left--right symmetric; see \cite{Huang24}.

An \emph{Iwanaga--Gorenstein} ring is a two-sided noetherian ring of
finite self-injective dimension on both sides. For Artin algebras,
the flat dimensions displayed above equal projective dimensions.

\begin{corollary}\label{cor:auslander}
Let $S/R$ be a Frobenius extension of two-sided noetherian rings.
\begin{enumerate}
\item Left or right quasi-$n$-Gorensteinness and the Auslander
condition pass from $R$ to $S$.
\item If $S_R$ is a generator, these properties are equivalent for
$R$ and $S$. More generally, prescribed bounds on the flat dimensions
of corresponding terms in the regular injective resolutions are equivalent.
\item If $S_R$ is a generator, $R$ is Iwanaga--Gorenstein if and only
if $S$ is. Their corresponding left and right self-injective dimensions
are equal.
\end{enumerate}
\end{corollary}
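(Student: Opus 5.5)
The plan is to derive all three parts directly from Theorem~\ref{thm:injective-comparison}, applied to the regular module on each side. For (1), take $N={}_RR$ in \eqref{eq:induced-injective}. Since $S\otimes_RR\simeq{}_SS$, this gives
\[
 \fd_SE_S^i(S)\leq\fd_RE_R^i(R)\qquad(i\geq0).
\]
Hence any bound of the form $\fd_RE_R^i(R)\leq c_i$ is inherited by $S$. With $c_i=i+1$ for $0\leq i<n$ this yields left quasi-$n$-Gorensteinness, and with $c_i=i$ for all $i$ it yields the Auslander condition. For the right-handed version I will use the right-handed form of Theorem~\ref{thm:injective-comparison}, which is available because $S/R$ being Frobenius gives the symmetric data: $S_R$ is finitely generated projective and ${}_RS_S\simeq\Hom_{R^{\op}}(S_R,R_R)$. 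So $S^{\op}/R^{\op}$ is again a Frobenius extension, and applying the same inequality to $R_R$ gives right quasi-$n$-Gorensteinness. The Auslander condition is left--right symmetric by \cite{Huang24}, so the left statement already suffices there.

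For (2), assume $S_R$ is a generator. Then \eqref{eq:regular-comparison} gives the equalities $\fd_SE_S^i(S)=\fd_RE_R^i(R)$ for all $i$. Any prescribed family of bounds on these flat dimensions therefore holds for $R$ if and only if it holds for $S$, and this contains both quasi-$n$-Gorensteinness and the Auslander condition as special cases. For the right side I need the generator hypothesis to transfer to the opposite extension. Lemma~\ref{lem:generator} provides this: $S_R$ is a generator if and only if ${}_RS$ is one. Applying \eqref{eq:regular-comparison} to $S^{\op}/R^{\op}$ then gives equality of the right-hand flat dimensions as well.

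For (3), \eqref{eq:regular-comparison} already gives $\id_SS=\id_RR$ on the left, including infinite values. The right-handed version, obtained through Lemma~\ref{lem:generator} in the same way, gives $\id S_S=\id R_R$. Being Iwanaga--Gorenstein means both of these are finite, and $S$ is two-sided noetherian since it is a finitely generated module over $R$ on either side. So the equivalence follows.

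The main obstacle is justifying the passage to opposite rings. Concretely, I must check that the bimodule isomorphism ${}_RS_S\simeq\Hom_{R^{\op}}(S_R,R_R)$ makes $R^{\op}\to S^{\op}$ Frobenius in the sense of \eqref{eq:frobenius-ring}, and that the generator condition is symmetric. Everything else is a direct substitution into Theorem~\ref{thm:injective-comparison}.
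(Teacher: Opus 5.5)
Your proposal is correct and follows the paper's own argument, which is simply to apply Theorem~\ref{thm:injective-comparison} to the regular modules on both sides. Your explicit checks that $S^{\op}/R^{\op}$ is again a Frobenius extension, and that the generator hypothesis transfers to it via Lemma~\ref{lem:generator}, spell out what the paper's phrase ``on both sides'' leaves implicit.
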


\begin{proof}
Apply Theorem~\ref{thm:injective-comparison} on both sides.
\end{proof}

The \emph{Auslander--Gorenstein conjecture} (AGC) states that an Artin
algebra satisfying the Auslander condition is Iwanaga--Gorenstein.
We say that AGC holds for an Artin algebra $A$ if this implication
holds for $A$. This is distinct from the Auslander--Reiten
self-extension conjecture. The next result follows from
Corollary~\ref{cor:auslander}.

\Needspace{12\baselineskip}
\begin{theorem}\label{prop:agc}
Let $S/R$ be a Frobenius extension of Artin algebras.
Assume that $S_R$ is a generator. Then AGC holds for $R$ if and only
if it holds for $S$.
\end{theorem}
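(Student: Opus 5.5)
The plan is to observe that, for a single Artin algebra $A$, ``AGC holds for $A$'' is the implication
\[
 \bigl(\fd_AE_A^i(A)\leq i\ \text{for all } i\geq0\bigr)\ \Longrightarrow\ \bigl(\id_AA<\infty \text{ and } \id A_A<\infty\bigr).
\]
It therefore suffices to show that, under the hypotheses, both the hypothesis and the conclusion of this implication are equivalent for $R$ and $S$. Artin algebras are two-sided noetherian, and a Frobenius extension of Artin algebras is in particular a Frobenius extension of two-sided noetherian rings. Hence Corollary~\ref{cor:auslander} and Theorem~\ref{thm:injective-comparison} apply directly.

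\textbf{Step 1 (hypotheses match).} Since $S_R$ is a generator, \eqref{eq:regular-comparison} gives $\fd_SE_S^i(S)=\fd_RE_R^i(R)$ for every $i\geq0$. Consequently $R$ satisfies the Auslander condition if and only if $S$ does; this is Corollary~\ref{cor:auslander}(2). The Auslander condition is defined on the left and is left--right symmetric, so no separate right-handed check is needed. For safety, the right-handed comparison is also available: by Lemma~\ref{lem:generator}, ${}_RS$ is a generator, so the right-handed version of Theorem~\ref{thm:injective-comparison} applies.

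\textbf{Step 2 (conclusions match).} By Corollary~\ref{cor:auslander}(3), $R$ is Iwanaga--Gorenstein if and only if $S$ is. Explicitly, $\id_RR=\id_SS$ on the left, and the right-handed version gives $\id R_R=\id S_S$ through the generator condition on the other side.

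\textbf{Step 3 (conclude).} Suppose AGC holds for $R$ and $S$ satisfies the Auslander condition. By Step 1, $R$ satisfies it, so $R$ is Iwanaga--Gorenstein, and by Step 2 so is $S$. The converse direction is symmetric.

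The only point requiring care is Step 2 on the right side. There one needs the right-handed analogue of the generator hypothesis, namely that ${}_RS$ is a generator. This is exactly the equivalence $(1)\Leftrightarrow(2)$ in Lemma~\ref{lem:generator}, so I expect no real obstacle. The rest is bookkeeping with the equalities in \eqref{eq:regular-comparison}, including the case of infinite values.
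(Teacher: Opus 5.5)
Your proposal is correct and follows essentially the paper's argument. The paper likewise transfers the Auslander condition between $R$ and $S$ via Corollary~\ref{cor:auslander}(1)--(2) and Iwanaga--Gorensteinness via Corollary~\ref{cor:auslander}(3), then chains these implications in each direction; your extra bookkeeping on the right-handed generator condition via Lemma~\ref{lem:generator} is accurate and simply makes explicit what the paper leaves inside Theorem~\ref{thm:injective-comparison}.
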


\begin{proof}
Suppose that AGC holds for $R$ and that $S$ satisfies the Auslander
condition. By Corollary~\ref{cor:auslander}(2), $R$ also satisfies
the Auslander condition and is therefore Iwanaga--Gorenstein.
Part (3) then implies that $S$ is Iwanaga--Gorenstein.
Thus AGC holds for $S$.

Conversely, suppose that AGC holds for $S$ and that $R$ satisfies
the Auslander condition. By Corollary~\ref{cor:auslander}(1), $S$
satisfies that condition, so it is Iwanaga--Gorenstein. Part (3)
implies that $R$ is Iwanaga--Gorenstein. Hence AGC holds for $R$.
\end{proof}

\begin{remark}\label{rem:literature}
(1) The ascent of quasi-$n$-Gorensteinness in
Corollary~\ref{cor:auslander}(1) is \cite[Theorem~4.1]{Zhao24};
ascent of $n$-Gorensteinness is proved in \cite[Proposition~5.2]{Zhao24}.
For noetherian algebras, Liu proves that quasi-$n$-Gorensteinness
is equivalent under faithfully flat Frobenius extensions
\cite[Corollary~5.19]{Liu}; \cite[Proposition~5.21]{Liu} treats $n$-Gorensteinness
under additional commutativity and centrality hypotheses.

(2) Xi \cite{Xi} compares flat-dominant dimensions using Frobenius
bimodules, and Gu, Huang and Zhao \cite{GHZ} study homological
invariants and $(l,n)$-conditions under Frobenius extensions.
Theorem~\ref{thm:injective-comparison} records the flat dimension of
each individual injective term, rather than only the length of an
initial segment of flat injectives. Its proof uses the established principle of comparing a minimal
injective resolution with another injective resolution.

(3) For an Artin algebra satisfying the Auslander condition,
Huang \cite[Corollary~4.12]{Huang24} identifies Iwanaga--Gorensteinness
with left or right weak Gorensteinness. This connects
Corollary~\ref{cor:weak-ring} with the Auslander-type comparisons.
Proposition~\ref{prop:agc} is a transfer consequence of the equivalence
of the Auslander condition and of Iwanaga--Gorensteinness for $R$ and
$S$ under the stated generator hypothesis.

\end{remark}

\section{Examples and explicit torsionfree levels}\label{sec:examples}

We compute the torsionfree filtration of a radical-square-zero
algebra and then apply Frobenius transfer to a truncated polynomial
extension. This gives non-Gorenstein algebras with
$\T^2=\T^\infty=\GP$ and nonprojective Gorenstein projective modules.

\begin{example}\label{ex:polynomial}
Let $R$ be a ring and $S=R[t]/(t^m)$, where $m\geq2$ and $t$ is
central. The coefficient map
\[
 E:S\longrightarrow R,\qquad
 E\left(\sum_{i=0}^{m-1}r_it^i\right)=r_{m-1},
\]
together with the dual bases $(t^i)_{i=0}^{m-1}$ and
$(t^{m-1-i})_{i=0}^{m-1}$, defines a Frobenius system. Then $
 E:S\longrightarrow R$ is a Frobenius extension.
Indeed, for every $s\in S$,
\[
 s=\sum_{i=0}^{m-1}t^iE(t^{m-1-i}s)
  =\sum_{i=0}^{m-1}E(st^i)t^{m-1-i}.
\]
Both ${}_RS$ and $S_R$ are free of rank $m$, and the constant-term
map splits $R\to S$ as $R$-bimodules. Thus the results of
Section~\ref{sec:extensions} apply, with their stated finiteness
hypotheses.
\end{example}

\begin{proposition}\label{prop:base-algebra} Let $k$ be a field, $d\geq 2$, and \[ R_d=k[x_1,\ldots,x_d]/(x_1,\ldots,x_d)^2. \] 
Let $\mathfrak m=(x_1,\ldots,x_d)$ be the unique maximal ideal of $R_d$, and identify $k$ with the simple $R_d$-module $R_d/\mathfrak m$. Then \[ \mathcal T^1(R_d\text{-mod}) =\operatorname{add}(R_d\oplus k), \] \[ \mathcal T^n(R_d\text{-mod}) =\mathcal P(R_d\text{-mod}) \qquad (n\geq2), \] and \[ {}^\perp\mathcal P(R_d\text{-mod}) =\mathcal P(R_d\text{-mod}). \] In particular, $R_d$ is weakly Gorenstein but not Gorenstein. \end{proposition}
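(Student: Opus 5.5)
Write $R=R_d$ and $\mm=\operatorname{rad}R$. I will use three facts: $R$ is local with $\mm^2=0$; $\dim_k\mm=d$; and $\soc R=\mm$, which has dimension $d\geq2$. In particular $R$ is not self-injective. The plan is to first describe torsionless modules concretely, then show that $k$ is not reflexive, and finally compute ${}^\perp\Pj$ by a dimension count on syzygies.

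\emph{Step 1 (torsionless modules).} Let $M$ be torsionless. Then $M\subseteq R^r$, so $\mm M\subseteq\mm R^r\subseteq\soc(R^r)$.

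Decompose $M$ into indecomposables and take a non-projective indecomposable summand $N$. I claim $N$ is annihilated by $\mm$. A module with $\mm N=0$ that is indecomposable must be $k$, so this gives $N\cong k$.

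To prove the claim, use the left approximation $N\to R^r$. If some element $n\in N\setminus\mm N$ had image with a unit coordinate, then composing with that coordinate projection gives an epimorphism $N\to R$. This splits, so $R$ is a summand of $N$, contradicting indecomposability and non-projectivity. Hence every map $N\to R$ sends $N$ into $\mm$. Since $N\to R^r$ is monic, this puts $N$ inside $\mm^r$, and then $\mm N\subseteq\mm^2 R^r=0$. So $N\cong k$.

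Conversely, $k\cong Rx_1\subseteq R$ is torsionless. Combined with Proposition~\ref{prop:summands}, this gives $\T^1=\add(R\oplus k)$.

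\emph{Step 2 ($k$ is not $2$-torsionfree).} Every map $k\to R$ lands in $\soc R=\mm\cong k^d$. So the left approximation of $k$ is $k\to R^d$, given by $1\mapsto(x_1,\dots,x_d)$; one checks that this is an approximation.

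Its cokernel $C$ has $\dim_k C=d(d+1)-1$ and is generated by $d$ elements. I then check that $C$ is not in $\add(R\oplus k)$. By dimension, $C$ would have to be $R^{d-1}\oplus k^{d}$. But the top of $C$ is $k^d$, which rules out any decomposition with $R^{d-1}$ together with extra copies of $k$ in the top. The cleanest test: $C\cong R^{d-1}\oplus k^{j}$ would force $\dim_k C/\mm C=d-1+j=d$, so $j=1$. That gives $\dim_k C=(d-1)(d+1)+1=d^2$, while $\dim_k C=d^2+d-1$. These differ because $d\geq2$, so $C\notin\T^1$.

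By Proposition~\ref{prop:recurrence}, $k\notin\T^2$. Hence $\T^2=\add R$, and then $\T^n=\Pj$ for all $n\geq2$.

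\emph{Step 3 (${}^\perp\Pj=\Pj$).} Take $M\in{}^\perp\Pj$ without projective summands, using a minimal projective resolution. All syzygies $\Omega^iM$ lie in ${}^\perp\Pj$.

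For $i\geq1$ the syzygy $\Omega^iM$ sits inside $\mm P$, so it is semisimple, $\Omega^iM\cong k^{a_i}$. Since $\Omega^iM$ is nonzero whenever $M$ is non-projective, we get $k\in{}^\perp\Pj$, i.e. $\Ext^j(k,R)=0$ for all $j\geq1$.

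This is impossible. The minimal resolution of $k$ is $0\to k^d\to R\to k\to0$, continued by $R^d\to k^d$, and so on. Applying $\Hom(-,R)$, every map $k\to R$ lands in the socle. A computation shows that $\Ext^1(k,R)$ has dimension $d\cdot d-(d^2+\dots)$; more simply, $\Ext^1(k,R)=0$ would force $k\to R$ maps to extend along $k^d\subseteq R$. Under that extension the socle map $k^d\to R$ sending $e_1\mapsto x_1$, $e_i\mapsto0$ would have to extend to $R\to R$, which is multiplication by an element $r$ with $r x_j=\delta_{1j}x_1$. No such $r$ exists, since $r$ must be a nonzero scalar plus radical, and such an $r$ multiplies every $x_j$ by the same scalar. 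So $\Ext^1(k,R)\neq0$, a contradiction.

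Hence ${}^\perp\Pj=\Pj=\GP$, so $R$ is weakly Gorenstein. $R$ is not Gorenstein because it is local Artinian with $\dim\soc R=d\geq2$.

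\emph{Main obstacle.} The delicate point is Step 1: excluding non-projective indecomposable torsionless modules other than $k$. This needs the careful splitting argument via a coordinate projection $N\to R$ that is onto.
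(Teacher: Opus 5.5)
Your proof is correct. Step~3 is essentially the paper's argument. The paper also reduces to $\Ext^1_R(k,R)\neq0$ via the semisimple syzygy, and gets $\dim_k\Ext^1_R(k,R)=d^2-1$ because the image of $\Hom_R(R,R)$ in $\Hom_R(\mm,R)\cong\Hom_k(\mm,\mm)$ consists only of scalars. Your non-extendable map $x_j\mapsto\delta_{1j}x_1$ on $\mm$ witnesses exactly this. Your stray formula ``$d\cdot d-(d^2+\dots)$'' should simply read $d^2-1$, and the maps to be extended are maps $\mm\to R$, not $k\to R$.

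Step~1 uses the same splitting idea as the paper, packaged differently. The paper takes $N\subseteq R^b$. If some $v\in N$ has a unit coordinate, it splits off $Rv$ via a coordinate projection $\pi$ with $\pi(v)=1$, so $N=Rv\oplus(N\cap\Ker\pi)$, and it inducts on $b$. Otherwise $N\subseteq\mm R^b$ is semisimple. You instead apply the split surjection onto $R$ indecomposable by indecomposable, relying on Krull--Schmidt and Proposition~\ref{prop:summands}.

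The genuine difference is Step~2. The paper uses the classical identification of $\T^2(R\modu)$ with reflexive modules (Remark~\ref{rem:classical}, via Auslander--Bridger). It computes $k^*\cong\soc R=\mm\cong k^d$, hence $k^{**}\cong k^{d^2}\not\cong k$. You stay inside Definition~\ref{def:torsionfree}: you write down the approximation sequence $0\to k\to R^d\to C\to0$ and use Proposition~\ref{prop:recurrence} to reduce to $C\notin\T^1$. Your route needs no appeal to the transpose description and shows concretely where the second approximation step fails. The paper's route is shorter.

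Your count showing $C\notin\add(R\oplus k)$ has a slip, though. Dimension alone does not force $C\cong R^{d-1}\oplus k^{d}$; for instance, when $d=2$ the module $k^5$ has the right dimension. Your ``cleanest test'' then only rules out $a=d-1$. The fix uses the two invariants you already computed, for arbitrary $a,c$. If $C\cong R^a\oplus k^c$, then $\dim_k C/\mm C=a+c=d$ and $\dim_k C=a(d+1)+c=d^2+d-1$. Subtracting gives $ad=d^2-1$, which has no integer solution for $d\geq2$. Equivalently, $\dim_k\mm C=d^2-1$ would have to equal $ad$. With that repair, $k\notin\T^2$ follows, and the rest of your argument goes through as written.
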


 \begin{proof} Notice first that $R_d$ is a commutative local Artin algebra with \[ \mathfrak m^2=0,\qquad R_d/\mathfrak m\simeq k,\qquad \mathfrak m\simeq k^d \] as $R_d$-modules. We first determine the torsionless modules.

 Since $R_d$ is local, every finitely generated projective $R_d$-module is free. Thus a torsionless module may be viewed as a submodule $ N\subseteq R_d^b $ for some $b>0$. If $N\subseteq\mathfrak mR_d^b$, then $\mathfrak mN=0$ because $\mathfrak m^2=0$. Hence $N$ is semisimple, and therefore $ N\simeq k^c$ for some $c\geq0$. Suppose that $N\not\subseteq\mathfrak mR_d^b$. Then $N$ contains a vector $v=(v_1,\ldots,v_b)$ with a unit coordinate, say $v_j$. Projection onto the $j$th coordinate, followed by multiplication by $v_j^{-1}$, gives a homomorphism $ \pi:R_d^b\longrightarrow R_d $ such that $\pi(v)=1$. 

Hence $ R_d^b=R_dv\oplus\ker\pi $ and $ N=R_dv\oplus(N\cap\ker\pi)$. Moreover, $R_dv\simeq R_d$ and $\ker\pi\simeq R_d^{b-1}$. Induction on $b$ therefore gives $ N\simeq R_d^a\oplus k^c $ for some $a,c\geq0$. Conversely, $k$ embeds into $R_d$ via \[ k\longrightarrow R_d,\qquad \overline{1}\longmapsto x_1, \] since $\mathfrak m x_1=0$. Thus both $R_d$ and $k$ are torsionless. We conclude that \[ \mathcal T^1(R_d\text{-mod}) =\operatorname{add}(R_d\oplus k). \] 

We next determine the reflexive modules. Put $(-)^*=\operatorname{Hom}_{R_d}(-,R_d)$. Since an $R_d$-homomorphism $k\to R_d$ is determined by an element annihilated by $\mathfrak m$, we have $k^*\simeq\operatorname{soc}R_d$. As $\mathfrak m^2=0$, while no unit is annihilated by $\mathfrak m$, $\operatorname{soc}R_d=\mathfrak m$. Hence $ k^*\simeq\mathfrak m\simeq k^d$ and therefore $k^{**}\simeq (k^d)^*\simeq k^{d^2}$.  Since $d\geq2$, the module $k$ is not reflexive. Every torsionless module has the form $R_d^a\oplus k^c$, and the canonical evaluation morphism commutes with finite direct sums. Since $R_d$ is reflexive and $k$ is not, such a module is reflexive if and only if $c=0$. Thus \[ \mathcal T^2(R_d\text{-mod}) =\mathcal P(R_d\text{-mod}). \]

 Since projective modules are infinitely torsionfree and $\mathcal T^{n+1}\subseteq\mathcal T^n$, it follows that \[ \mathcal T^n(R_d\text{-mod}) =\mathcal P(R_d\text{-mod}) \qquad (n\geq2). \] It remains to compute ${}^\perp\mathcal P(R_d\text{-mod})$.

 Let $N$ be a nonprojective finitely generated $R_d$-module and take a projective cover \[ 0\longrightarrow\Omega N\longrightarrow P\longrightarrow N \longrightarrow0. \] Since $\Omega N\subseteq\operatorname{rad}P=\mathfrak mP$ and $\mathfrak m^2=0$, the module $\Omega N$ is semisimple. Moreover, $\Omega N\neq0$ because $N$ is nonprojective. Hence $\Omega N\simeq k^a$ for some $a\geq1$. Consider the exact sequence \[ 0\longrightarrow\mathfrak m \longrightarrow R_d \longrightarrow k \longrightarrow0. \] Applying $\operatorname{Hom}_{R_d}(-,R_d)$ gives \[ \operatorname{Ext}^1_{R_d}(k,R_d) \simeq \operatorname{Coker}\!\left( \operatorname{Hom}_{R_d}(R_d,R_d) \longrightarrow \operatorname{Hom}_{R_d}(\mathfrak m,R_d) \right). \] Since $\operatorname{soc}R_d=\mathfrak m$ and $\mathfrak m^2=0$, \[ \operatorname{Hom}_{R_d}(\mathfrak m,R_d) \simeq\operatorname{Hom}_k(\mathfrak m,\mathfrak m), \] which has $k$-dimension $d^2$. The image of $\operatorname{Hom}_{R_d}(R_d,R_d)$ consists precisely of scalar multiplications on $\mathfrak m$, and hence has dimension one. Therefore \[ \dim_k\operatorname{Ext}^1_{R_d}(k,R_d)=d^2-1>0. \] 

By dimension shifting, $\operatorname{Ext}^2_{R_d}(N,R_d) \simeq \operatorname{Ext}^1_{R_d}(\Omega N,R_d) \simeq \operatorname{Ext}^1_{R_d}(k,R_d)^a \neq0$. Thus no nonprojective module belongs to ${}^\perp\mathcal P(R_d\text{-mod})$, and hence \[ {}^\perp\mathcal P(R_d\text{-mod}) =\mathcal P(R_d\text{-mod}). \] Since the projective modules are Gorenstein projective, this also shows that $R_d$ is weakly Gorenstein. Finally, \[ \operatorname{soc}R_d=\mathfrak m \qquad\text{and}\qquad \dim_k\operatorname{soc}R_d=d>1. \] A commutative local Artin algebra is Gorenstein if and only if its socle is one-dimensional over its residue field. Hence $R_d$ is not Gorenstein. \end{proof}

\begin{corollary}\label{cor:family}
For $R_d$ as above and $m\geq2$, put $S=S_{d,m}=R_d[t]/(t^m)$.
Then
\begin{align*}
 \T^1(S\modu)&=\{M\mid {}_{R_d}M\in\add(R_d\oplus k)\},\\
 \T^n(S\modu)&=\{M\mid {}_{R_d}M\text{ is finite free}\}
                   \qquad(n\geq2).
\end{align*}
Moreover,
\[
 S\modu\supsetneq\T^1(S\modu)\supsetneq\T^2(S\modu)
 =\T^\infty(S\modu)=\GP(S\modu).
\]
The algebra $S$ is weakly Gorenstein but not Gorenstein.
For $1\leq q<m$, the module $M_q=S/(t^q)$ is nonprojective
Gorenstein projective and has a two-periodic complete resolution.
\end{corollary}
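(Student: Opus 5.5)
The plan is to combine Example~\ref{ex:polynomial} with Corollary~\ref{cor:ring-transfer}(1) and Proposition~\ref{prop:base-algebra}. By Example~\ref{ex:polynomial}, $S=R_d[t]/(t^m)$ is a Frobenius extension of the Artin algebra $R_d$, with ${}_{R_d}S$ free of rank $m$. Restriction $U$ is a faithful Frobenius functor, and it restricts to finitely generated modules. Corollary~\ref{cor:ring-transfer}(1) then gives $M\in\T^n(S\modu)$ if and only if ${}_{R_d}M\in\T^n(R_d\modu)$. Inserting the description of $\T^1$ and $\T^n$ ($n\geq2$) from Proposition~\ref{prop:base-algebra} yields the two displayed formulas. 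For $n\geq2$, projective $R_d$-modules are free since $R_d$ is local. The same corollary, applied to $\T^\infty$ and Gorenstein projectivity, gives
\[
 \T^\infty(S\modu)=\{M\mid {}_{R_d}M\text{ free}\}=\GP(S\modu),
\]
because $\T^\infty(R_d\modu)=\GP(R_d\modu)=\Pj(R_d\modu)$. Hence $\T^2=\T^\infty=\GP$ over $S$.

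Weak Gorensteinness of $S$ follows from Corollary~\ref{cor:weak-ring}, since it passes from $R_d$ to $S$. Alternatively, $\T^2(S\modu)=\T^3(S\modu)$ together with Theorem~\ref{thm:stabilization} gives the same conclusion. For non-Gorensteinness, the constant-term map splits $R_d\to S$, so ${}_{R_d}S$ is free and in particular a generator. Corollary~\ref{cor:auslander}(3) then gives $\id_SS=\id_{R_d}R_d$. This value is infinite: $R_d$ is a non-Gorenstein commutative local Artin algebra, and for such rings finite self-injective dimension forces self-injectivity, that is, Gorensteinness.

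For the strict inclusions I will use explicit witnesses.
\begin{enumerate}
\item The module $k=S/(\mm,t)$ satisfies ${}_{R_d}k=k\in\add(R_d\oplus k)$, but it is not $R_d$-free. Hence $k\in\T^1(S\modu)\setminus\T^2(S\modu)$.
\item A module $M$ with ${}_{R_d}M$ outside $\add(R_d\oplus k)$ is needed. I take $M=R_d/(x_1)$ with $t$ acting by $0$. As an $R_d$-module it is indecomposable of length $d\geq2$, hence neither $R_d$ (length $d+1$) nor $k$. So $M\notin\T^1(S\modu)$, which gives $S\modu\neq\T^1(S\modu)$.
\end{enumerate}

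Finally consider $M_q=S/(t^q)$. As an $R_d$-module it is $\bigoplus_{i<q}R_dt^i$, which is free of rank $q$. Thus $M_q\in\GP(S\modu)$ by the above. It is nonprojective: $S$ is local, so projectives are free of $k$-length a multiple of $m(d+1)$, while $M_q$ has length $q(d+1)$ with $0<q<m$. For the two-periodic complete resolution, use the exact sequences
\[
 0\to S/(t^{m-q})\xra{t^q}S\to S/(t^q)\to0,\qquad
 0\to S/(t^{q})\xra{t^{m-q}}S\to S/(t^{m-q})\to0 .
\]
Splicing them gives the acyclic complex $\cdots\xra{t^{m-q}}S\xra{t^q}S\xra{t^{m-q}}S\to\cdots$. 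Its $\Hom_S(-,S)$-exactness follows because $\Hom_S(-,S)$ returns a complex of the same shape: $\Hom_S(S,S)\simeq S$ and multiplication by the central element $t^j$ corresponds to itself. The main point to check is exactness of $S\xra{t^q}S\xra{t^{m-q}}S$. This holds because $S$ is free over $R_d$ on the basis $1,t,\ldots,t^{m-1}$, so the annihilator of $t^{m-q}$ is exactly $t^qS$.
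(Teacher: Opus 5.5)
Your proof is correct, and its backbone is the paper's. You use transfer along the faithful restriction functor (Corollary~\ref{cor:ring-transfer}) from Proposition~\ref{prop:base-algebra}, the same witnesses $k$ and $R_d/(x_1)$ for strictness, and the same splice $\cdots\xra{t^{m-q}}S\xra{t^q}S\xra{t^{m-q}}\cdots$.

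The real divergence is the proof of non-Gorensteinness. The paper simply computes $\soc S=\mm t^{m-1}$, which has dimension $d>1$. You instead transfer $\id_{R_d}R_d=\infty$ to $S$ through Theorem~\ref{thm:injective-comparison}, combined with Bass's fact that a commutative local Artinian ring of finite self-injective dimension is self-injective. The paper's route is elementary and self-contained. Yours presents the example as an instance of Corollary~\ref{cor:auslander}(3) and gives $\id_SS=\infty$ directly, at the cost of citing an outside theorem.

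You can avoid Bass by using the paper's own computation $\orth{R_d\modu}=\Pj(R_d\modu)$. If $\id_{R_d}R_d=n<\infty$, then $\Omega^nk\in\orth{R_d\modu}$ would be projective. But the minimal syzygies $\Omega^jk\simeq k^{d^j}$ are never projective.

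A small wording point: ${}_{R_d}S$ is free because of the basis $1,t,\ldots,t^{m-1}$, not because of the splitting. The splitting by itself already gives ${}_{R_d}R_d\in\add{}_{R_d}S$, which is all the generator hypothesis needs.

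The remaining differences are minor:
\begin{enumerate}
\item You read off Gorenstein projectivity of $M_q$ from ${}_{R_d}M_q\simeq R_d^{q}$, so the complete resolution only serves periodicity. The paper instead proves Gorenstein projectivity from the totally acyclic complex itself.
\item Your length count ($q(d+1)$ is not a multiple of $m(d+1)$) replaces the paper's idempotent argument for nonprojectivity.
\item Corollary~\ref{cor:weak-ring} is an equally valid substitute for the paper's appeal to Theorem~\ref{thm:stabilization} for weak Gorensteinness.
\end{enumerate}
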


\begin{proof}
Corollary~\ref{cor:ring-transfer} and
Proposition~\ref{prop:base-algebra} give the formulas for $\T^n$.
The latter proposition also gives
$\GP(R_d\modu)=\Pj(R_d\modu)$.
The Gorenstein projective part of Corollary~\ref{cor:ring-transfer}
therefore gives $\T^2(S\modu)=\GP(S\modu)$.
Weak Gorensteinness follows from Theorem~\ref{thm:stabilization}.

For strictness, regard $R_d/(x_1)$ and $k$ as $S$-modules with
$t$ acting by zero. The first has dimension $d$ and is not
semisimple. It cannot be $R_d^a\oplus k^c$: its dimension forces
$a=0$, contradicting nonsemisimplicity. Hence it lies outside
$\T^1(S\modu)$. The module $k$ belongs to $\T^1(S\modu)$ but
not to $\T^2(S\modu)$.

Writing an element of $S$ as $\sum_{i=0}^{m-1}r_it^i$, multiplication
by $t$ and the $x_j$ shows that
\[
 \soc S=\mm t^{m-1}=\bigoplus_{j=1}^d kx_jt^{m-1}.
\]
Its dimension is $d>1$, so $S$ is not Gorenstein.

Finally, $M_q$ has the complete resolution
\[
 \cdots\xra{t^q}S\xra{t^{m-q}}S\xra{t^q}S
 \xra{t^{m-q}}S\xra{t^q}\cdots.
\]
Indeed, $\Ker(t^q)=t^{m-q}S$ and $\Ker(t^{m-q})=t^qS$.
The same identities hold on free modules and their direct summands,
so applying $\Hom_S(-,Q)$ for a projective $Q$ leaves this complex
exact. Multiplication by $t^{m-q}$ identifies $M_q$ with the
cocycle $t^{m-q}S$.
If $M_q$ were projective, the nonzero nilpotent ideal $(t^q)$ would
be a direct summand of $S$ and hence generated by a nonzero idempotent,
which is impossible.
\end{proof}

For $d=m=2$, this gives the six-dimensional algebra
\[
 \Lambda=k[x,y,t]/\big((x,y)^2,t^2\big).
\]
Its radical satisfies $J^2=(xt,yt)$ and $J^3=0$, and $\Lambda/(t)$
has a one-periodic complete resolution with differential
multiplication by $t$.

\medskip
\noindent\textbf{Acknowledgements.}

The author thanks Professor Xiao-Wu Chen, Zhi-Wei Li and Xiaojin Zhang for their helpful suggestions and comments. This work is supported by the National Natural Science Foundation of China (No.
12371015, 12571034).

\end{document}